\newtheorem{theorem}{Theorem}[section]
\newtheorem{lemma}[theorem]{Lemma}
\newtheorem{proposition}[theorem]{Proposition}
\newtheorem{procedure}[theorem]{Procedure}
\theoremstyle{definition}
\newtheorem{definition}[theorem]{Definition}
\newtheorem{example}[theorem]{Example}
\theoremstyle{remark}
\numberwithin{equation}{section}
\newcommand{\wt}{\ensuremath{\mathrm{wt}}}
\newcommand{\rev}{\ensuremath{\mathrm{rev}}}
\newcommand{\Des}{\ensuremath{\mathrm{Des}}}
\newcommand{\SPCT}{\ensuremath{\mathsf{SPCT}}}
\newcommand{\MPCT}{\ensuremath{\mathsf{MPCT}}}
\newcommand{\SMPCT}{\ensuremath{\mathsf{SMPCT}}}
\newcommand{\SYCT}{\ensuremath{\mathsf{SYCT}}}
\newcommand{\SPYCT}{\ensuremath{\mathsf{SPYCT}}}
\newcommand{\DIRT}{\ensuremath{\mathsf{DIRT}}}
\newcommand{\SIT}{\ensuremath{\mathsf{SIT}}}
\newcommand{\Std}{\ensuremath{\mathrm{Std}}}
\newcommand{\unmark}{\ensuremath{\mathtt{unmark}}}
\newcommand{\QSym}{\ensuremath{\mathsf{QSym}}}
\newcommand{\rw}{\ensuremath{\mathrm{rw}}}
\newcommand{\Peak}{\ensuremath{\mathrm{Peak}}}
\newcommand{\comp}{\ensuremath{\mathrm{comp}}}
\newcommand{\C}{\ensuremath{\mathbb{C}}}
\newcommand{\set}{\ensuremath{\mathrm{set}}}
\newcommand{\excise}[1]{}
\newlength\cellsize \setlength\cellsize{15\unitlength}
\newcommand\cellify[1]{\def\thearg{#1}\def\nothing{}%
\ifx\thearg\nothing\vrule width0pt height\cellsize depth0pt%
  \else\hbox to 0pt{\usebox2\hss}\fi%
  \vbox to 15\unitlength{\vss\hbox to 15\unitlength{\hss$#1$\hss}\vss}}
\newcommand\tableau[1]{\vtop{\let\\=\cr
\setlength\baselineskip{-15000pt}
\setlength\lineskiplimit{15000pt}
\setlength\lineskip{0pt}
\halign{&\cellify{##}\cr#1\crcr}}}
\begin{document}


\title[Expanding quasisymmetric Schur $Q$-functions]{Expanding quasisymmetric Schur $Q$-functions into peak Young quasisymmetric Schur functions}

\author[D. Searles]{Dominic Searles}
\address{Department of Mathematics and Statistics, University of Otago, 730 Cumberland St., Dunedin 9016, New Zealand}
\email{dominic.searles@otago.ac.nz}
\author[M. Slattery-Holmes]{Matthew Slattery-Holmes}
\address{Department of Mathematics and Statistics, University of Otago, 730 Cumberland St., Dunedin 9016, New Zealand}
\email{slama077@student.otago.ac.nz}


\subjclass[2020]{05E05, 05E10}

\date{June 18, 2024}


\keywords{Quasisymmetric functions, peak functions, tableaux}

\begin{abstract}
The dual immaculate and Young quasisymmetric Schur bases of quasisymmetric functions possess analogues in the peak algebra: respectively, the quasisymmetric Schur $Q$-functions and the peak Young quasisymmetric Schur functions. We show elements of the former basis expand into the latter basis with nonnegative coefficients.
\end{abstract}

\maketitle

\section{Introduction}
Several bases of the Hopf algebra of quasisymmetric functions serve as analogues of the famous Schur basis of the Hopf algebra of symmetric functions. These include the widely-studied \emph{fundamental quasisymmetric functions} \cite{Gessel}, \emph{dual immaculate functions} \cite{BBSSZ:dualimmaculate} and \emph{(Young) quasisymmetric Schur functions} \cite{HLMvW11:QS, LMvWbook}.

Both the dual immaculate functions and Young quasisymmetric Schur functions expand in the basis of fundamental quasisymmetric functions with nonnegative coefficients. Theses expansions are indexed by descent sets associated to, respectively, \emph{standard immaculate tableaux} ($\SIT$s) and \emph{standard Young composition tableaux} ($\SYCT$s). 
In \cite{AHM}, Allen, Hallam and Mason gave a formula for the expansion of dual immaculate functions in Young quasisymmetric Schur functions, proving that this expansion also has nonnegative coefficients. To prove their formula they gave a descent-preserving map, defined in terms of an insertion algorithm, that takes an $\SIT$ to an $\SYCT$.

Certain bases of symmetric or quasisymmetric functions have important analogues in the \emph{peak algebra}, a Hopf subalgebra of quasisymmetric functions. 
The \emph{Schur $Q$-functions}, important in the projective representation theory of symmetric groups, form a peak analogue of the Schur functions, and the \emph{peak functions}, introduced in \cite{Stembridge:enriched} in the context of enriched $P$-partitions, form a peak analogue of the fundamental quasisymmetric functions. 

More recently, Jing and Li \cite{Jing.Li} introduced the \emph{quasisymmetric Schur $Q$-functions}, a peak analogue of the dual immaculate functions, and Kantarc{\i} O{\u g}uz \cite{Oguz} proved these expand nonnegatively in peak functions, with the expansion indexed by \emph{standard peak composition tableaux} ($\SPCT$s). Subsequently, Searles \cite{Searles:0HC} introduced the \emph{peak Young quasisymmetric Schur functions}, a peak analogue of the Young quasisymmetric Schur functions, which also expand nonnegatively in peak functions, with the expansion indexed by \emph{standard peak Young composition tableaux} ($\SPYCT$s). 

The question as to whether quasisymmetric Schur $Q$-functions expand into peak Young quasisymmetric Schur functions with nonnegative coefficients was raised in \cite{Searles:0HC}. 
In this paper, we provide a formula for this expansion, giving an affirmative answer to this question. The $\SPCT$s are a subset of the $\SIT$s, and the $\SPYCT$s are a subset of the $\SYCT$s, which suggests the strategy of applying the insertion algorithm of \cite{AHM} to the $\SPCT$s. However, there is a complication in that the notion of descent set of an $\SPCT$ used by Kantarc{\i} O{\u g}uz in \cite{Oguz} differs from that used for an $\SIT$ by Allen, Hallam and Mason in \cite{AHM}. Therefore, in order to apply the techniques of \cite{AHM}, we first obtain a formula for the expansion of a quasisymmetric Schur $Q$-function in peak functions that uses the same notion of descent set as in \cite{AHM}. We also give a procedure that allows the coefficients of the expansion of a quasisymmetric Schur $Q$-function in peak Young quasisymmetric Schur functions to be computed without recourse to insertion, adapting an analogous procedure in \cite{AHM}, and use this to characterise when a quasisymmetric Schur $Q$-function is equal to a peak Young quasisymmetric Schur function.

We recently became aware of work of Choi, Nam and Oh \cite{CNO}, in which our main results (and also the characterisation of when a quasisymmetric Schur $Q$-function is equal to a peak Young quasisymmetric Schur function) are obtained independently. Our approach differs in the method used to obtain the key intermediate result: the formula for quasiymmetric Schur $Q$-functions in terms of peak functions using the same notion of descent as in \cite{AHM}. We obtain this result using a formula in \cite{Oguz} for quasisymmetric Schur $Q$-functions in terms of monomial quasisymmetric functions, whereas in \cite{CNO} this is obtained by constructing a bijection from the set of all $\SPCT$s (of a given shape) to itself that transports one notion of descent set to the other.

\section{Background}
Let $n$ be a positive integer. We denote the set $\{1,\ldots , n\}$ by $[n]$, and for a positive integer $m$ with $m<n$, we denote the set $\{m, m+1, \ldots , n\}$ by $[m,n]$. A \emph{composition} $\alpha$ of $n$, denoted $\alpha \vDash n$, is a finite sequence of positive integers that sum to $n$. If $\alpha = (\alpha_1,\ldots ,\alpha_k)$, we say $\alpha_i$ is the $i$th \emph{part} of $\alpha$, and that the \emph{length} $\ell(\alpha)$ of $\alpha$ is $k$. The \emph{reverse} of a composition $\alpha$, denoted $\alpha^{\rev}$, is the composition obtained by listing the parts of $\alpha$ from right to left.  
A composition $\beta$ \emph{refines} $\alpha$ if $\alpha$ can be obtained by summing consecutive entries of $\beta$. 
If $\alpha$ is a composition of $n$ then $\set(\alpha)$ is the set \[\set(\alpha) = \{\alpha_1,\alpha_1+\alpha_2,\ldots ,\alpha_1+\alpha_2+\cdots +\alpha_{k-1}\}\subseteq [n-1].\] Given a set $X = \{x_1, \ldots , x_r\} \subseteq [n-1]$, where $x_1 < \cdots < x_r$, $\comp_n(X)$ is the composition defined by \[\comp_n(X) = (x_1,x_2-x_1,x_3-x_2,\ldots,x_r-x_{r-1},n-x_r)\vDash n.\]

\begin{example}
$\alpha = (3,2,5,1)$ is a composition of 11 and $l(\alpha)=4$. Here $\alpha^{\rev} = (1,5,2,3)$, the composition $(2,1,2,4,1,1)$ refines $\alpha$, and $\set(\alpha)=\{3,5,10\}$. 
\end{example}

\subsection{Quasisymmetric functions}
Denote by $\C[[x_1,x_2,x_3,\ldots]]$ the Hopf algebra of formal power series of bounded degree in countably many variables. An element $f$ of $\C[[x_1,x_2,x_3,\ldots]]$ is a \emph{quasisymmetric function} if, for any composition $(\alpha_1,\alpha_2,\ldots,\alpha_k)$, the coefficient of $x_1^{\alpha_1}\cdots x_k^{\alpha_k}$ in $f$ is equal to the coefficient of $x_{i_1}^{\alpha_1}\cdots x_{i_k}^{\alpha_k}$ in $f$, for any increasing sequence $i_1<i_2<\cdots<i_k$ of positive integers. Denote the Hopf algebra of quasisymmetric functions by $\QSym$.  

The \emph{monomial quasisymmetric functions} form a basis for $\QSym$. Given any $\alpha = (\alpha_1, \ldots , \alpha_k)\vDash n$, the monomial quasisymmetric function $M_\alpha$ is 
\[M_\alpha = \sum_{1\le i_1<i_2<\cdots <i_k}x_{i_1}^{\alpha_1}x_{i_2}^{\alpha_2}\cdots x_{i_k}^{\alpha_k}.\]

Another basis for $\QSym$ is the \emph{fundamental quasisymmetric functions} \cite{Gessel}. They expand in the monomial quasisymmetric functions with nonnegative coefficients, by the formula
\[F_\alpha = \sum_{\beta \,\, {\rm refines} \,\, \alpha} M_\beta.\] 
Often we will find it useful to index fundamental quasisymmetric functions by sets instead of compositions. For fixed $n$, if $X\subseteq [n-1]$, then we denote $F_{\comp_n(X)}$ by $F_X$.

\begin{example}
The monomial quasisymmetric function corresponding to the composition $(3,1)$ is 
\[M_{(3,1)} = \sum_{1\le i<j} x_i^3x_j.\]
The compositions which refine $(3,1)$ are $(3,1),(2,1,1),(1,2,1),$ and $(1,1,1,1)$, so we have 
\[F_{(3,1)} = M_{(3,1)}+M_{(2,1,1)}+M_{(1,2,1)}+M_{(1,1,1,1)}.\]
\end{example}

Given a subset $X$ of $[n]$, the \emph{peak set of $X$}, denoted $\Peak(X)$, is defined as \[\Peak(X)=\{i\in X:i-1\notin X \text{ and }i\neq 1\}.\] 
For a composition $\alpha$, let $\Peak(\alpha)$ denote $\Peak(\set(\alpha))$. 
A composition $\alpha$ is called a \emph{peak composition} if all parts of $\alpha$ except possibly the last part are greater than $1$. 

The \emph{peak functions} \cite{Stembridge:enriched} form a basis for a Hopf subalgebra of $\QSym$ called the \emph{peak algebra}.

\begin{definition}
Let $\alpha$ be a peak composition of $n$. The peak function $K_\alpha$ is defined by the formula 
    \[K_\alpha = 2^{|\Peak(\alpha)|+1}\sum_{\beta \,\, : \,\,  \Peak(\alpha)\subseteq \set(\beta)\triangle (\set(\beta)+1)}F_\beta,\]
where $\beta\vDash n$, $\set(\beta)+1 = \{i+1:i\in \set(\beta)\}$, and $\triangle$ denotes the symmetric difference of sets. 
\end{definition}

\begin{example}
To expand the peak function $K_{(3,1)}$ into fundamental quasisymmetric functions, note that $\Peak(3,1)=\Peak(\set(3,1)) = \Peak(\{3\}) = \{3\}$, and the compositions $\beta \vDash 4$ such that $\{3\}\subseteq \set(\beta)\triangle (\set(\beta)+1)$ are $(3,1),(2,2),(1,2,1)$, and $(1,1,2)$.  
Therefore, 
\[K_{(3,1)}=4\left(F_{(3,1)}+F_{(2,2)}+F_{(1,2,1)}+F_{(1,1,2)}\right).\]
\end{example}

\section{Quasisymmetric Schur $Q$-functions}\label{sec:peakexpansion} 
In this section we introduce the quasisymmetric Schur $Q$-functions of Jing and Li \cite{Jing.Li}. We then obtain a formula for the expansion of quasisymmetric Schur $Q$-functions into peak functions, in terms of a family of tableaux whose descent sets are the \emph{immaculate descent sets} used in \cite{AHM}. We begin by describing a formula \cite{Oguz} for their expansion in monomial quasisymmetric functions.

The \emph{diagram} of a composition $\alpha$, denoted $D_\alpha$, is the collection of left-justified boxes with $\ell(\alpha)$ rows such that the $i$th row from the bottom contains $\alpha_i$ boxes. We denote the box of $D_\alpha$ in row $r$ and column $c$ by $(c,r)$, where the bottom row is row 1, and the leftmost column is column 1. 

Let $X$ be a set. A \emph{filling} of $D_\alpha$  with entries from $X$ is a function $T:D_\alpha \rightarrow X$. We say $T$ has \emph{shape} $\alpha$, and if $\alpha$ is a peak composition we say $T$ has \emph{peak shape}. We refer to $T(c,r)$ as the \emph{entry} of $T$ in box $(c,r)$, and depict $T$ by writing the entries inside the boxes.  
If $S$ and $T$ are fillings of $D(\alpha)$, then for an entry $T(c,r)$ in $T$, we say the entry $S(c,r)$ is the entry in $S$ \emph{corresponding} to $T(c,r)$.

\begin{definition}{\cite[Definition 3.1]{Oguz}}
Let $\alpha$ be a peak composition of $n$. A filling $T$ of $D_\alpha$ with entries from $1'<1<2'<2<\cdots$ is called a \emph{marked peak composition tableau} if it satisfies the following conditions:
\begin{enumerate}[leftmargin=2cm]
    \item[$\MPCT1$:] Entries in each row weakly increase from left to right, with no repeated marked entries.
    \item[$\MPCT2$:] Entries in the first column strictly increase from bottom to top. 
    \item[$\MPCT3$:] For any $k\le n$, the subdiagram of $D_\alpha$ consisting of all boxes whose entries in $T$ are in $\{1',1,2',2,\ldots ,k', k\}$, is the diagram of a peak composition.
    \item[$\MPCT4$:] If there is an (unmarked) entry $i$ in position $(2,r)$ for some row $r$, then we may not have $i$ or $i'$ in position $(1,r+1)$.  
\end{enumerate}
\end{definition}
Note that in fact the first column of an $\MPCT$ uses only distinct integers, since an incidence of an $i'$ immediately below an $i$ in the first column would violate $\MPCT4$ if the entry immediately right of the $i'$ is $i$, and would violate $\MPCT3$ otherwise.

For $T$ an $\MPCT$, the \emph{weight} $\wt(T)$ of $T$ is the sequence $\big(\wt(T)_1,\wt(T)_2,\ldots ,\wt(T)_n\big)$ where $\wt(T)_i$ denotes the number of occurrences of $i$ or $i'$ in $T$.

For $\alpha$ a peak composition, let $\MPCT(\alpha)$ denote the set of all $\MPCT$ of shape $\alpha$ whose weight has no zero entry to the left of a nonzero entry. In particular, if $T\in \MPCT(\alpha)$ then $\wt(T)$ is a composition.    
For $T\in \MPCT(\alpha)$, we will at times make use of the notation $|T(c,r)| = i$ to mean $T(c,r)\in \{i,i'\}$. 

\begin{example} The filling shown below is an element of $\MPCT(3,5,4,3)$, and has weight $(1,3,1,5,3,2)$.
\[\begin{array}{c@{\hskip \cellsize}c@{\hskip \cellsize}c@{\hskip \cellsize}c@{\hskip \cellsize}c@{\hskip \cellsize}c} \tableau{5'&6&6\\4&4&4&5'\\2'&2&4&4&5'\\1'&2'&3}\end{array}\]
\end{example}

We take the formula of \cite{Oguz} below as our definition of quasiysmmetric Schur $Q$-functions.

\begin{definition}\label{def:qsqf}\cite[Proposition 3.2]{Oguz}
The quasisymmetric Schur $Q$-function $\tilde{Q}_\alpha$ is defined by
\[\tilde{Q}_\alpha = \sum_{T\in \MPCT(\alpha)}M_{\wt(T)}.\]
\end{definition}

The quasisymmetric Schur $Q$-functions form a basis for the peak algebra \cite{Jing.Li}. In \cite{Oguz}, a formula is given for the expansion of $\tilde{Q}_\alpha$ in peak functions, in terms of standard peak composition tableaux. However, in this formula the descent sets associated to these tableaux do not agree with those in \cite{AHM}. We require a new formula for the expansion in terms of peak functions, which uses the same descent sets employed in \cite{AHM}, in order to apply their results in Section~\ref{sec:mainexpansion}. 

Following the approach of \cite{Oguz}, we will use Definition~\ref{def:qsqf} to obtain such a formula. 
The first step is to coarsen the expansion in Definition~\ref{def:qsqf} to an expansion in fundamental quasisymmetric functions.

Given a peak composition $\alpha$ of $n$, define the \emph{standard marked peak composition tableaux} of shape $\alpha$, denoted $\SMPCT(\alpha)$, to be the subset of $\MPCT(\alpha)$ that have each number from $1$ to $n$ as an entry, either marked or unmarked. We now define a standardisation map from $\MPCT(\alpha)$ to $\SMPCT(\alpha)$.

\begin{definition}
Let $\alpha$ be a peak composition of $n$. The \emph{standardisation map} $\Std$ maps $T\in \MPCT(\alpha)$ to a filling $\Std(T)$ as follows.

To begin, let $\Std(T)=D_\alpha$. Supposing $T$ contains $j_1$ instances of $1'$ and $k_1$ instances of $1$, first assign entries from $[j_1]$ to the boxes of $\Std(T)$ which correspond to the boxes of $T$ containing $1'$, in order from bottom to top. Next, assign entries from $[j_1+1, j_1+k_1]$ to the boxes of $\Std(T)$ which correspond to the boxes of $T$ containing $1$, in order from left to right along rows, starting at the highest row containing a $1$ and proceeding downwards. Continue this process with the $j_2$ instances of $2'$ and $k_2$ instances of $2$ in $T$, assigning the entries from $[j_1+k_1+1, j_1+k_1+j_2]$ and then from $[j_1+k_1+j_2+1, j_1+k_1+j_2+k_2]$, and so on with the remaining entries of $T$, until all boxes of $\Std(T)$ have been assigned an entry. 

Mark entries in $\Std(T)$ if and only if the corresponding entry in $T$ is marked.
\end{definition}

\begin{definition}
Given a peak composition $\alpha$ of $n$, for $S\in \SMPCT(\alpha)$, we say $i\in [n-1]$ is a \emph{descent} of $S$ if $i$ is unmarked and strictly below $i+1$ in $S$, or $i+1$ is marked and weakly below $i$ in $S$. The \emph{descent set} of $S$, denoted $\Des(S)$, is the set of descents of $S$. 
\end{definition}
    
\begin{example}\label{ex:standardisation}
Below is an element $T$ of $\MPCT(3,5,4,3)$ and its standardisation. 
\[T = \begin{array}{c@{\hskip \cellsize}c@{\hskip \cellsize}c@{\hskip \cellsize}c@{\hskip \cellsize}c@{\hskip \cellsize}c} \tableau{5'&6&6\\4&4&4&5'\\2'&2&4&4&5'\\1'&2'&3}\end{array}~~\mapsto~~\Std(T) =\begin{array}{c@{\hskip \cellsize}c@{\hskip \cellsize}c@{\hskip \cellsize}c@{\hskip \cellsize}c@{\hskip \cellsize}c} \tableau{13'&14&15\\6&7&8&12'\\3'&4&9&10&11'\\1'&2'&5}\end{array}\]
Observe that $\Std(T)\in \SMPCT(3,5,4,3)$. We have $\Des(\Std(T)) = \{1,5,10\}$.
\end{example}

\begin{proposition}
Let $\alpha$ be a peak composition of $n$. For any $T\in \MPCT(\alpha)$, we have $\Std(T)\in \SMPCT(\alpha)$.
\end{proposition}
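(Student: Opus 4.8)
The plan is to verify that $\Std(T)$ satisfies the two defining requirements of $\SMPCT(\alpha)$: first, that it is a genuine $\MPCT$ of shape $\alpha$ (so conditions $\MPCT1$ through $\MPCT4$ all hold); and second, that each of $1,\ldots,n$ appears exactly once, marked or unmarked. The second requirement is essentially immediate from the construction, since the standardisation assigns the consecutive blocks of integers $[1,j_1]$, $[j_1+1,j_1+k_1]$, and so on, exhausting $[n]$ with each integer used exactly once, and marking is inherited from $T$. So the substance of the proof lies in checking $\MPCT1$ through $\MPCT4$.

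First I would set up notation tracking how the standardisation refines each value $i$ of $T$ into a block of consecutive standardised values: the marked copies $i'$ are assigned the lower sub-block (bottom-to-top within the first column, and more generally as specified), and the unmarked copies $i$ are assigned the higher sub-block (left-to-right along rows, top row first and proceeding downward). The key observation I would isolate is that the standardisation is \emph{order-preserving}: if box $(c,r)$ has a strictly smaller absolute value than box $(c',r')$ in $T$, then the same strict inequality holds in $\Std(T)$; and within a block of equal absolute value, the tie-breaking rule is chosen precisely so that no two boxes receive equal standardised values while respecting the row and column conditions. With this monotonicity in hand, each structural condition can be checked by reducing it to the corresponding condition already known to hold in $T$.

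For $\MPCT1$ (rows weakly increase, no repeated marked entries), I would argue that within a row, the standardised values increase since $T$'s row entries weakly increase and ties among unmarked entries are resolved left-to-right, while marked entries in $T$ are already distinct by $\MPCT1$; the tie-breaking converts weak increases into the correct weak increases with distinct markings. For $\MPCT2$, the first column already uses distinct integers in $T$ (as noted in the remark following the $\MPCT$ definition), so the order-preserving property immediately gives a strict increase bottom-to-top in $\Std(T)$. The most delicate conditions are $\MPCT3$ and $\MPCT4$, and I expect $\MPCT3$ to be the main obstacle. For $\MPCT3$ I must show that for every $k$, the set of boxes of $\Std(T)$ with values in $\{1',1,\ldots,k',k\}$ forms the diagram of a peak composition. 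The critical point is to verify that the standardisation never breaks the peak-shape property \emph{midway through a block}: as one adds the standardised boxes coming from a single value $i$ of $T$ (first the marked copies, then the unmarked copies in the prescribed reading order), each intermediate subdiagram must remain a composition diagram and ultimately a peak composition diagram. The chosen reading orders (marked bottom-to-top in the first column region, unmarked along rows from the top downward) are exactly what guarantees that partial blocks yield valid left-justified row-length-decreasing-compatible shapes, and I would prove this by showing each added box extends a row already present or starts a new topmost contribution consistent with peak shape, invoking $\MPCT3$ for $T$ at the relevant thresholds. For $\MPCT4$ I would translate the forbidden configuration (an unmarked entry in $(2,r)$ with the same value or its marked version in $(1,r+1)$) back through standardisation: since distinct standardised values arise only from the order-preserving refinement, a violation in $\Std(T)$ would force, via the monotonicity and the block structure, a corresponding violation of $\MPCT4$ (or $\MPCT3$) in $T$, a contradiction. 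The bulk of the careful work is the block-by-block peak-shape verification for $\MPCT3$.
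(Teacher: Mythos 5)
Your overall framework (standardisation is order-preserving; reduce each condition on $\Std(T)$ to a condition on $T$) is the same as the paper's, and your treatment of $\MPCT1$ and $\MPCT2$ is fine. But your plan for $\MPCT3$ has a genuine gap: you propose to verify the block-by-block peak-shape property ``invoking $\MPCT3$ for $T$ at the relevant thresholds,'' and $\MPCT3$ of $T$ alone is not enough. The only way the intermediate subdiagram of $\Std(T)$ can fail to be a peak composition is if the box $(1,r+1)$ receives its standardised entry before the box $(2,r)$ does, i.e.\ $\Std(T)(1,r)<\Std(T)(1,r+1)<\Std(T)(2,r)$. When $|T(1,r+1)|<|T(2,r)|$ this does contradict $\MPCT3$ of $T$, but the critical remaining case is $|T(1,r+1)|=|T(2,r)|=i$: the standardisation order (marked before unmarked, and unmarked entries assigned from the \emph{highest} row downward) really does assign $(1,r+1)$ before $(2,r)$ when $T(2,r)$ is an unmarked $i$ and $T(1,r+1)$ is $i$ or $i'$. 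That configuration is excluded only by $\MPCT4$ of $T$, not by $\MPCT3$. For instance, the filling with bottom row $1',2$ and top row $2,3$ satisfies $\MPCT1$--$\MPCT3$ but fails $\MPCT4$, and its standardisation (bottom row $1',3$; top row $2,4$) fails $\MPCT3$ at threshold $2$ --- so any argument for $\MPCT3$ of $\Std(T)$ that does not use $\MPCT4$ of $T$ cannot succeed.

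A secondary misallocation: you budget ``delicate'' work for verifying $\MPCT4$ of $\Std(T)$, but for a standard filling $\MPCT4$ is vacuous --- each value of $[n]$ occurs exactly once (marked or unmarked), so one cannot have $i$ in box $(2,r)$ and $i$ or $i'$ in box $(1,r+1)$ simultaneously. The paper dispatches $\MPCT4$ in one sentence on exactly these grounds, and spends the hypothesis $\MPCT4$-of-$T$ where it is actually needed, namely in the contradiction argument for $\MPCT3$ of $\Std(T)$ described above. Redirecting your invocation of $\MPCT4$ from the former check to the latter would close the gap.
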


\begin{proof}
Let $S$ denote $\Std(T)$. By definition $S$ is of shape $\alpha$. By construction, $S$ contains each number from $[n]$ (marked or unmarked) precisely once, meaning it satisfies $\MPCT4$. It remains to show $S$ satisfies $\MPCT1 - \MPCT3$. 

For $\MPCT1$, we must show entries increase along rows of $S$. Since $T$ satisfies $\MPCT1$, for any $c,r$ we have $T(c,r)\leq T(c+1,r)$. If $|T(c,r)| < |T(c+1,r)|$, then we assign an entry to the box $S(c,r)$ before $S(c+1,r)$, and so $S(c,r)<S(c+1,r)$. If $|T(c,r)|=|T(c+1,r)|$, then both these entries are unmarked ($\MPCT1$). If we have multiple instances of $i$ (unmarked) in a single row of $T$ then we assign entries to the corresponding boxes in $S$ in order from left to right, so again $S(c,r)<S(c+1,r)$. Hence $S$ satisfies $\MPCT1$. 

Similarly, we have $|T(1,r)|< |T(1,r+1)|$ for all rows $r$, so the entry at $S(1,r)$ gets assigned before $S(1,r+1)$. Therefore $S$ satisfies $\MPCT2$.

Suppose for a contradiction that $S$ fails $\MPCT3$. 
Then for some row $r$, we have $S(1,r)<S(1,r+1)<S(2,r)$. Therefore we have $T(1,r)<T(1,r+1)\le T(2,r)$ with $T(1,r+1)= T(2,r)$ only if this entry is unmarked, due to the order of standardisation. If $|T(1,r+1)|< |T(2,r)|$ then $T$ violates $\MPCT3$, if not, $T$ violates $\MPCT4$.
\end{proof}

\begin{lemma}\label{lem:inexpansion}
Let $\alpha$ be a peak composition of $n$ and let $S\in \SMPCT(\alpha)$. For any $T\in \MPCT(\alpha)$ such that $\Std(T)=S$, we have that $M_{\wt(T)}$ appears in the expansion of $F_{\comp_n(\Des(S))}$ in monomial quasisymmetric functions.
\end{lemma}

\begin{proof}
This is equivalent to showing that $\wt(T)$ refines $\comp_n(\Des(S))$. In order to show that this is the case, we need to demonstrate that for any descent in $S$, the corresponding entry $i$ or $i'$ in $T$ is the last $i$ or $i'$ to be assigned to an entry in $S$ under standardisation.

We can consider cases here. We will refer to the assignment of an entry to a box in $S$ which corresponds to a box in $T$ containing entry $i$ as \emph{mapping} $i$ into $S$.

\textbf{Case 1:} First, let us consider mapping an entry $i'$ when there is still another $i'$ yet un-mapped. By definition of standardisation, any subsequent $i'$ is strictly above the current $i'$ in $T$. Hence we map the current $i'$ to some $k'$ in $S$ and the next $i'$ to $(k+1)'$ in $S$. Since $(k+1)'$ is strictly above $k'$ in $S$, $k\notin \Des(S)$.

\textbf{Case 2:} Suppose we map $i'$ into $S$ and the next entry to map into $S$ is an $i$. The $i'$ maps to some $k'$ and the $i$ to $k+1$, but if $k$ is marked and $k+1$ unmarked in $S$, then $k\notin \Des(S)$.

\textbf{Case 3:} Suppose we map some $i$ and the next number to map is also $i$. In this situation, by the ordering in which we map these numbers, the second $i$ must be weakly below the first $i$. In this case, the first $i$ maps to some $k$ and the next $i$ to $k+1$; since $k+1$ is weakly below $k$ in $S$, $k\notin \Des(S)$. 
\end{proof} 

\begin{lemma}\label{lem:destandardise}
Let $\alpha$ be a peak composition of $n$, let $S\in \SMPCT(\alpha)$, and let $\beta$ be a composition that refines $\comp_n(\Des(S))$. There is a unique $T\in \MPCT(\alpha)$ with $\wt(T)=\beta$ such that $\Std(T)=S$.
\end{lemma}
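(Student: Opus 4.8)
The plan is to construct the map $\beta \mapsto T$ explicitly as a de-standardisation and then verify both that it lands in $\MPCT(\alpha)$ and that it is the unique preimage of $S$ with the prescribed weight. Since $\beta$ refines $\comp_n(\Des(S))$, the set $\set(\beta)$ contains $\Des(S)$; so each block of $\beta$ determines a consecutive interval $[a+1,b]$ of the entries $1,\dots,n$ of $S$ that contains no descent of $S$ strictly inside it. The idea is that within each such descent-free block, the corresponding entries of $S$ should all be merged to a common value of $T$, and the marked/unmarked status of each such entry in $T$ should be inherited from $S$. Concretely, if the $m$th block of $\beta$ occupies the standardised values $[a+1,b]$, I would set $|T(c,r)| = m$ for every box whose entry in $S$ lies in $[a+1,b]$, and mark that box in $T$ if and only if the corresponding box of $S$ is marked.

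First I would check this is well-defined as a filling and that $\wt(T)=\beta$, which is immediate: the $m$th block has size $\beta_m$, and exactly $\beta_m$ boxes get value $m$ in $T$. Next I would verify $\Std(T)=S$. This is the crucial consistency check, and it is where the hypothesis that $\beta$ refines $\comp_n(\Des(S))$ (equivalently, that each block is descent-free) does the real work. Within one block, the absence of a descent between consecutive standardised values $k$ and $k+1$ means, by the three cases analysed in Lemma~\ref{lem:inexpansion}, that $k,k+1$ relate exactly as two equal entries of $T$ must under the standardisation order: two unmarked equal entries are read left-to-right along rows from the top row down; two marked equal entries are read bottom-to-top; and a marked followed by an unmarked equal entry likewise respects the ``marks first'' convention. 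I would argue that recording the marks from $S$ and collapsing each block to a single value therefore reproduces $S$ under $\Std$, by showing the standardisation order restricted to a block agrees with the order the values $a+1,\dots,b$ already appear in $S$.

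The substantive obstacle is checking that $T$ genuinely satisfies $\MPCT1$--$\MPCT3$ (it satisfies $\MPCT4$ automatically, being a de-standardisation whose violations would already be visible in $S$), together with the condition that $\wt(T)=\beta$ has no zero before a nonzero part, so that $T\in\MPCT(\alpha)$ rather than merely being a filling. Conditions $\MPCT1$ and $\MPCT2$ follow because collapsing to block-values is weakly order-preserving and because the no-descent condition forbids exactly the configurations (an unmarked $i$ with $i+1$ strictly above, say) that would create a strict decrease or an illegal repeat after collapsing; the no-repeated-marked-entries clause of $\MPCT1$ needs the observation that two marked entries in the same row of $S$ in the same block would force, via Case~1, that they are not in the same row. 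Condition $\MPCT3$ is the most delicate: I must rule out that merging a block creates a row $r$ with $|T(1,r)|<|T(1,r+1)|\le |T(2,r)|$ and equal middle/right values, i.e.\ that the collapsed diagram of each $\{1',1,\dots,k',k\}$ remains a peak-composition diagram. Here I would lean on the fact that $S$ itself satisfies $\MPCT3$ and that collapsing only identifies values lying in a common descent-free block, so no new box can enter a ``$k$-subdiagram'' ahead of a box in its own row; any failure would have to come from two entries $k,k+1$ of $S$ with no descent between them sitting in the forbidden relative positions, which is precisely excluded by the definition of $\Des(S)$.

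Finally, for uniqueness I would argue in the reverse direction: any $T'\in\MPCT(\alpha)$ with $\wt(T')=\beta$ and $\Std(T')=S$ must assign to each box the block-index of its standardised value (since the $m$th block of $\beta$ occupies exactly the standardised values $[a+1,b]$, and standardisation preserves the box each entry occupies), and must carry the same marks as $S$ (since $\Std$ preserves marks). Hence $T'=T$, establishing both existence and uniqueness. The only genuine risk in this last step is making sure the block-index is forced and not merely consistent; this is guaranteed because $\wt(T')=\beta$ fixes how many boxes carry each value, and $\Std(T')=S$ fixes which standardised values those boxes receive.
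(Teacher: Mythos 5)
Your construction of $T$ and your uniqueness argument coincide with the paper's, and your overall plan (verify $\wt(T)=\beta$, verify $T\in\MPCT(\alpha)$, verify $\Std(T)=S$, then force uniqueness from the block structure) is the right one. However, there is a genuine gap in the verification that $T\in\MPCT(\alpha)$: you dismiss $\MPCT4$ as holding ``automatically, being a de-standardisation whose violations would already be visible in $S$.'' This is backwards. Since $S$ is standard, its entries are distinct integers, so $S$ satisfies $\MPCT4$ vacuously; a violation of $\MPCT4$ in $T$ --- an unmarked $i$ at $(2,r)$ together with an $i$ or $i'$ at $(1,r+1)$ --- arises precisely from \emph{merging} two distinct entries $p=S(2,r)$ and $q=|S(1,r+1)|$ of $S$ into one block, and is therefore never visible in $S$ as an $\MPCT4$ violation. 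Ruling it out requires a real argument: one shows $p<q$ (else $S$ fails $\MPCT3$), that $q-1$ lies strictly below $q$, and then locates a descent of $S$ inside $[p,q-1]$ (if $q-1$ is unmarked it is itself a descent; otherwise take $x$ to be the smallest marked element of $[p,q-1]$ and check that $x-1\in\Des(S)$), contradicting the hypothesis that $\beta$ refines $\comp_n(\Des(S))$. Ironically, you reserve the label ``most delicate'' for $\MPCT3$, which is in fact the easy case: the strict inequalities characterising a failure of $\MPCT3$ in $T$ transfer directly to $S$, with no use of the descent hypothesis at all.

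A secondary weakness: for the no-repeated-marked-entries clause of $\MPCT1$ and especially for $\MPCT2$, you assert that the no-descent condition ``forbids exactly the configurations'' that would cause a failure, but you only exhibit the obstruction for \emph{consecutive} standardised values $k,k+1$. When the two offending entries $p$ and $q$ of $S$ satisfy $q>p+1$, one must chain through all intermediate entries of the block (whose positions and markings are a priori unconstrained) to manufacture a descent somewhere in $[p,q-1]$; this chain argument, with a case split on whether $p$ is marked and an appeal to the already-established $\MPCT1$ case, is the longest part of the paper's proof and is missing from yours. The construction, the $\Std(T)=S$ check, and the uniqueness argument are otherwise sound and match the paper's.
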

\begin{proof} 
Construct a filling $T$ of $D(\alpha)$ by assigning $\beta_1$ $1$'s to the boxes of $T$ that correspond to entries in $S$ less than or equal to $\beta_1$, then $\beta_2$ $2$'s to the boxes of $T$ that correspond to the entries in $S$ from $[\beta_1+1,\beta_1+\beta_2]$, and so on, marking an entry in $T$ if and only if the corresponding entry in $S$ is marked; see Example~\ref{ex:destandardise}. 
Note that since $\beta$ refines $\comp_n(\Des(S))$, there cannot be any descent of $S$ in the interval of entries in $S$ corresponding to $i$ or $i'$ in $T$, for any $i$. 
We need to show $T\in \MPCT(\alpha)$, $\Std(T)=S$, and that no other element of $\MPCT(\alpha)$ with weight $\beta$ standardises to $S$.

First, we show $T\in \MPCT(\alpha)$. 
Since entries increase along rows of $S$, entries in a row of $T$ are assigned only after all entries to the left in the same row of $T$ have already been assigned. Therefore entries of $T$ weakly increase along rows. We need to verify that $T$ does not have two instances of some marked entry $j'$ in the same row. Suppose for a contradiction that $S$ has marked entries $p'<q'$ in the same row, both corresponding to $j'$ in $T$. We claim there is a descent of $S$ in the interval $[p,q-1]$. If $p=q-1$ then $p\in \Des(S)$. Suppose $p < q-1$. If $q-1\notin \Des(S)$, it must be marked and strictly below $q'$. Then for $q-2$ to not be a descent it must be marked and strictly below $(q-1)'$, and so on. So if there are no descents of $S$ in $[p+1,q-1]$, then $p+1$ is marked in $S$ and is below $p'$, hence $p'\in \Des(S)$. In any case, there is a descent of $S$ in $[p,q-1]$, giving the desired contradiction. 
Therefore $T$ satisfies $\MPCT1$.

Suppose for a contradiction that $T$ fails $\MPCT2$. 
Since entries increase up the first column of $S$, the entries in the first column of $T$ are assigned in order from bottom to top, so $T$ must have at least two (necessarily adjacent) entries from $\{i,i'\}$ in the first column. Choose two such adjacent entries, and let $p,q$ denote the corresponding entries of $S$ (with $p$ below $q$); note $p,q$ could be marked or unmarked. This configuration can occur in $T$ only if $S$ has no descent in $[p,q-1]$. All entries of $S$ in $[p,q-1]$ also correspond to $i$ or $i'$ in $T$, so there can be no descent of $S$ in $[p,q-1]$.
If $p=q-1$, then $S$ violates $\MPCT3$ on the entries at most $p$, so assume $p<q-1$. If $p$ is unmarked in $S$, then to avoid a descent in $[p,q-1]$, $p+1$ must be unmarked and weakly below $p$, then $p+2$ must be unmarked and weakly below $p+1$, and so on, but this implies $q-1$ is unmarked and strictly below $q$, and so $q-1\in \Des(S)$ regardless of whether $q$ is marked or unmarked. If $p$ is marked in $S$, consider the entry $x$ immediately right of $p$ (which exists, by $\MPCT3$). If $x$ is marked, then $q'$ and $x'$ are in the same row and both correspond to $i'$ in $T$, which the argument for $T$ satisfying $\MPCT1$ has already shown is impossible. Hence assume $x$ is unmarked. For $x$ to not be a descent, $x+1$ must be unmarked and weakly below $x$, then $x+2$ must be unmarked and weakly below $x+1$, and so on, and as before this implies $q-1$ is unmarked and strictly below $q$, so $q-1\in \Des(S)$. Hence $S$ has a descent in $[p,q-1]$, a contradiction. Thus $T$ satisfies $\MPCT2$.

Suppose for a contradiction that $T$ fails $\MPCT3$. Then we have $|T(1,r_1)|<|T(1,r_2)|<|T(2,r_1)|$ for some $r_1<r_2$; the first inequality is strict since it occurs in the first column, and the second inequality must be strict in order to violate $\MPCT3$. This implies the same relationship between the corresponding entries in $S$, so $S$ violates $\MPCT3$, a contradiction. 
Hence $T$ satisfies $\MPCT3$.

Finally, suppose for a contradiction that $T$ fails $\MPCT4$. Then for some $r$, $T(2,r)$ is an unmarked $i$ and $T(1,r+1)$ is an $i$ or $i'$. We will show there is a descent in the corresponding interval of entries in $S$. 
Let us denote $|S(1,r+1)| = q$ and $S(2,r) = p$. We know $p<q$ since otherwise $S$ violates $\MPCT3$. Also, it must be the case that $q-1$ lies strictly below $q$ in $S$, since entries of $S$ increase along rows and up the first column. 
If $q-1$ is unmarked then $q-1\in \Des(S)$. This includes the case $p=q-1$. So assume $(q-1)$ is marked, which implies $p<q-1$ since $p$ is unmarked.   
Let $x$ be the smallest marked element in $[p,q-1]$; $x$ exists since $q-1$ is marked, and $x-1$ is in $[p,q-1]$ since $p$ is unmarked. We have $x-1\in \Des(S)$, contradicting our assumption that $\beta$ refines $\comp_n(\Des(S))$. Hence $T$ must satisfy $\MPCT4$.
 
Next, we verify that $\Std(T)=S$. Given $k$ and $k+1$ in $S$, marked or unmarked, which both correspond to entries in $\{j,j'\}$ in $T$, we need to confirm that the $j$ which corresponds to $k$ in $S$ is assigned first. 
We can proceed by cases, depending on if $k$ and $k+1$ are marked or unmarked. 

\textbf{Case 1: $k',k+1\in S$.} Here $k'$ will be assigned before $k+1$, since marked entries standardise before unmarked entries. 

\textbf{Case 2: $k',k+1'\in S$.} If $k+1'$ is weakly below $k'$, then $k\in \Des(S)$, contradicting that $\beta$ refines $\comp_n(\Des(S))$. Hence $k+1'$ is above $k'$ and so $k'$ will be assigned to $S$ before $k+1'$. 

\textbf{Case 3: $k,k+1'\in S$.} Here $k\in \Des(S)$, contradicting that $\beta$ refines $\comp_n(\Des(S))$, so this case cannot occur. 

\textbf{Case 4: $k,k+1\in S$.} If $k+1$ is strictly above $k$ in $S$, then $k\in \Des(S)$. Therefore $k+1$ is weakly below $k$, meaning $k+1$ is either to the right of $k$ in the same row, or strictly below $k$. In either situation, the entry of $T$ in the position corresponding to $k$ in $S$ will standardise before that corresponding to $k+1$.

For uniqueness of $T$, suppose $U\in \MPCT(\alpha)$ satisfies $\wt(U)=\wt(T)$ and $Std(U) = S$. For any $i$, the entries in $S$ which the standardisation procedure assigns corresponding to the entries $i$ in $U$ are strictly smaller than the entries in $S$ which the standardisation procedure assigns corresponding to the entries $i+1$ in $U$. Therefore, the $1$'s must be in the same boxes in both $U$ and $T$, then the $2$'s must be in the same boxes in both $U$ and $T$, and so on, and an entry is marked in $U$ if and only if it is marked in $T$, since the marked entries are determined by $S$. Hence $U=T$.
\end{proof}

\begin{example}\label{ex:destandardise}
Let $S$ be the $\SMPCT$ of shape $(3,5,4,3)$ shown below and let $\beta = (1,3,1,5,3,2)$. Note $\Des(S)=\{1, 5,10\}$. The construction in the first paragraph of the proof of Lemma~\ref{lem:destandardise} yields $T\in \MPCT(3,5,4,3)$ shown below (cf. Example~\ref{ex:standardisation}). 
\[S \, = \, \begin{array}{c@{\hskip \cellsize}c@{\hskip \cellsize}c@{\hskip \cellsize}c@{\hskip \cellsize}c@{\hskip \cellsize}c} \tableau{13'&14&15\\6&7&8&12'\\3'&4&9&10&11'\\1'&2'&5}\end{array} \qquad \qquad \qquad T \, = \, \begin{array}{c@{\hskip \cellsize}c@{\hskip \cellsize}c@{\hskip \cellsize}c@{\hskip \cellsize}c@{\hskip \cellsize}c} \tableau{5'&6&6\\4&4&4&5'\\2'&2&4&4&5'\\1'&2'&3}\end{array}\]
\end{example}

\begin{proposition}\label{prop:Fexpansion}
Let $\alpha$ be a peak composition of $n$. We have 
\[\tilde{Q}_\alpha = \sum_{S\in \SMPCT(\alpha)}F_{\comp_n(\Des(S))}.\]
\end{proposition}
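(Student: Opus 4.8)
The plan is to regroup the monomial expansion of Definition~\ref{def:qsqf} according to the fibres of the standardisation map $\Std\colon\MPCT(\alpha)\to\SMPCT(\alpha)$. Since every $T\in\MPCT(\alpha)$ has a well-defined image $\Std(T)\in\SMPCT(\alpha)$ (by the preceding Proposition), the map $\Std$ partitions $\MPCT(\alpha)$ into disjoint fibres, and we may rewrite
\[
\tilde{Q}_\alpha = \sum_{T\in\MPCT(\alpha)}M_{\wt(T)} = \sum_{S\in\SMPCT(\alpha)}\ \sum_{\substack{T\in\MPCT(\alpha)\\ \Std(T)=S}} M_{\wt(T)}.
\]
It therefore suffices to show that for each fixed $S\in\SMPCT(\alpha)$ the inner sum equals $F_{\comp_n(\Des(S))}$.

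To do this I would invoke the monomial expansion of the fundamental quasisymmetric function, $F_{\comp_n(\Des(S))}=\sum_{\beta\,\mathrm{refines}\,\comp_n(\Des(S))}M_\beta$, and argue that the weight map $T\mapsto\wt(T)$ restricts to a bijection from the fibre $\{T\in\MPCT(\alpha):\Std(T)=S\}$ onto the set of compositions $\beta$ of $n$ refining $\comp_n(\Des(S))$. Lemma~\ref{lem:inexpansion} shows this map lands in the correct target set: every $T$ in the fibre has $\wt(T)$ refining $\comp_n(\Des(S))$, so each monomial $M_{\wt(T)}$ in the inner sum is already a term of $F_{\comp_n(\Des(S))}$. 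Lemma~\ref{lem:destandardise} supplies the two-sided inverse: for each $\beta$ refining $\comp_n(\Des(S))$ there is \emph{exactly one} $T\in\MPCT(\alpha)$ with $\wt(T)=\beta$ and $\Std(T)=S$, which gives simultaneously surjectivity (every such $\beta$ is attained) and injectivity (no $\beta$ is attained twice within the fibre) of the weight map.

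Combining these, the inner sum collapses to $\sum_{\beta\,\mathrm{refines}\,\comp_n(\Des(S))}M_\beta = F_{\comp_n(\Des(S))}$, and summing over $S\in\SMPCT(\alpha)$ yields the stated identity. I expect no residual computational obstacle, since the two lemmas have already absorbed all the combinatorial content. The only genuine subtlety is that the argument requires a \emph{bijection} on each fibre, not merely a containment in each direction: one must be careful that Lemma~\ref{lem:destandardise} produces a tableau lying precisely in the fibre over $S$ (that is, with $\Std(T)=S$, and not only with $\wt(T)=\beta$), and that Lemma~\ref{lem:inexpansion} forbids any $T$ in the fibre whose weight fails to refine $\comp_n(\Des(S))$. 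It is the interplay of these two conditions that prevents either over- or under-counting of monomials and closes the argument.
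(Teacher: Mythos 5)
Your proposal is correct and follows essentially the same route as the paper: the paper's proof also groups the monomial expansion of Definition~\ref{def:qsqf} over the fibres of $\Std$ and combines Lemma~\ref{lem:inexpansion} with Lemma~\ref{lem:destandardise} to identify each fibre's contribution with $F_{\comp_n(\Des(S))}$. Your explicit remark that the two lemmas together give a bijection (not merely two containments) is exactly the point the paper leaves implicit.
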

\begin{proof}
Combining Lemmas~\ref{lem:inexpansion} and \ref{lem:destandardise}, we obtain 
\[\sum_{T \, \in \, \MPCT(\alpha) \, : \, \Std(T)=S}M_{\wt(T)}=F_{\comp_n(\Des(S))},\]
from which the statement follows.
\end{proof}

We now introduce the family of tableaux used in \cite{Oguz} for the expansion of quasisymmetric Schur $Q$-functions into peak functions. 

\begin{definition}\cite{Oguz}
Let $\alpha$ be a peak composition of $n$. A filling $T$ of $D_\alpha$ with entries from $[n]$, each used once, is called a \emph{standard peak composition tableau} ($\SPCT$) of shape $\alpha$ if it satisfies the following conditions:
\begin{enumerate}[leftmargin=2cm]
    \item[$\SPCT1$:] Entries in each row increase from left to right.
    \item[$\SPCT2$:] Entries in the first column increase from bottom to top.
    \item[$\SPCT3$:] For all $k\le n$, the subdiagram of $D_\alpha$ consisting of the boxes that contain entries less than or equal to $k$ in $T$, is the diagram of a peak composition. 
\end{enumerate}
\end{definition}

Denote the set of standard peak composition tableaux of shape $\alpha$ by $\SPCT(\alpha)$. Observe that $\SPCT(\alpha)$ is precisely the subset of $\SMPCT(\alpha)$ with no marked entries.

\begin{definition}\label{def:DesSPCT}
For $T\in \SPCT(\alpha)$, the \emph{descent set} $\Des_{\uparrow}(T)$ of $T$ is given by \[\Des_{\uparrow}(T)=\{i\in T:i+1\text{ is strictly above } i\}.\]
\end{definition}
We use the upward arrow to indicate that a descent is an instance of $i+1$ above $i$. This definition of descent set is the same as the \emph{immaculate descent set} used in \cite{AHM}, and is different from the descent set defined for an $\SPCT$ in \cite{Oguz}. 

\begin{example}\label{SPCTex}
    For the peak composition $(3,3)$ we have 
\[\SPCT(3,3) = \left\{\,\, \begin{array}{c@{\hskip \cellsize}c@{\hskip \cellsize}c@{\hskip \cellsize}c@{\hskip \cellsize}c@{\hskip \cellsize}c} \tableau{4&5&6\\1&2&3}~,~ \tableau{3&5&6\\1&2&4}~,~\tableau{3&4&6\\1&2&5}~,~ \tableau{3&4&5\\1&2&6}\end{array}\!\!\!\right\}\] 
    with descent (and peak) sets respectively $\{3\},\{2,4\},\{2,5\},\{2\}$.
\end{example}

For $S\in \SMPCT(\alpha)$, define $\unmark(S)$ to be the $\SPCT$ obtained by removing all marks from entries of $S$. 
Given $Q\in \SPCT(\alpha)$, we denote $\Peak(\Des_{\uparrow}(Q))$ by $\Peak_{\uparrow}(Q)$.

\begin{lemma}\label{lem:unmark}
Let $Q\in \SPCT(\alpha)$. We have 
\[\sum_{\substack{S \, : \, \unmark(S)=Q\\S\in \SMPCT(\alpha)}}F_{\comp_n(\Des(S))} = K_{\comp_n(\Peak_{\uparrow}(Q))}.\]
\end{lemma}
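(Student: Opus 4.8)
The plan is to parametrize the fillings $S$ with $\unmark(S)=Q$ by the $2^n$ subsets of $[n]$ recording which entries are marked, to compute $\Des(S)$ explicitly from this marking data together with $E:=\Des_{\uparrow}(Q)$, and then, for each target set $D\subseteq[n-1]$, to count how many markings produce $\Des(S)=D$. Matching this count against the coefficient $2^{|P|+1}$ and the index set $\{\beta:P\subseteq\set(\beta)\triangle(\set(\beta)+1)\}$ appearing in the definition of $K_{\comp_n(P)}$, where $P:=\Peak_{\uparrow}(Q)$, will yield the claim. My first step is to check that \emph{every} marking of $Q$ gives a valid element of $\SMPCT(\alpha)$, so that the sum on the left really ranges over all $2^n$ markings. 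This is routine: since all entries of a standard filling are distinct integers, and $p<q$ implies that any marking of $p$ precedes any marking of $q$ in the order $1'<1<2'<2<\cdots$, conditions $\MPCT1$ and $\MPCT2$ for $S$ reduce to $\SPCT1$ and $\SPCT2$ for $Q$; condition $\MPCT3$ depends only on the absolute values $|S(c,r)|$, hence on $Q$, so it follows from $\SPCT3$; and $\MPCT4$ holds vacuously because $i$ and $i'$ cannot both occur.

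Next I would compute the descent set. Because marking does not move entries, whether $i+1$ lies strictly above $i$ or weakly below $i$ is determined by $Q$. Reading off the definition of $\Des$, I expect to obtain: if $i\in E$ then $i\in\Des(S)$ exactly when $i$ is unmarked, and if $i\notin E$ then $i\in\Des(S)$ exactly when $i+1$ is marked. Thus each position $i\in[n-1]$ is governed by a single marking bit, namely the mark on $i$ when $i\in E$ and the mark on $i+1$ when $i\notin E$. The heart of the argument is then a counting statement about this assignment of bits to positions. A bit (the mark on entry $j$) governs two positions precisely when it governs both position $j$ (requiring $j\in E$) and position $j-1$ (requiring $j-1\notin E$), i.e.\ precisely when $j\in\Peak(E)=P$; all other positions are governed by distinct bits, and the mark on $1$ governs nothing. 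Consequently the number of bits actually used is $(n-1)-|P|$, leaving $n-\bigl((n-1)-|P|\bigr)=|P|+1$ free bits.

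To finish, I would fix a target $D$ and count its preimages. The governed bits are forced by $D$, and for a peak $j\in P$ the two demands on the mark of $j$ (that it equal $1$ iff $j\notin D$, coming from position $j$, and that it equal $1$ iff $j-1\in D$, coming from position $j-1$) are compatible exactly when precisely one of $j-1,j$ lies in $D$, that is, when $j\in D\triangle(D+1)$. Hence a marking with $\Des(S)=D$ exists iff $P\subseteq D\triangle(D+1)$, and in that case the $|P|+1$ free bits may be chosen arbitrarily, giving exactly $2^{|P|+1}$ such markings. Summing, $\sum_{\unmark(S)=Q}F_{\comp_n(\Des(S))}=2^{|P|+1}\sum_{D:\,P\subseteq D\triangle(D+1)}F_{\comp_n(D)}$, which is precisely $K_{\comp_n(P)}=K_{\comp_n(\Peak_{\uparrow}(Q))}$ since $\Peak(\comp_n(P))=P$ for the peak set $P$.

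The main obstacle I anticipate is the bookkeeping in the second and third paragraphs: translating the two clauses of the definition of $\Des(S)$ into the clean "one bit per position" description, correctly identifying the doubly-governed positions with the peaks of $E$ (and confirming the free-bit count is $|P|+1$ in all cases, including boundary behaviour near entry $1$ and near $n-1$), and verifying that the compatibility condition for a peak is exactly $j\in D\triangle(D+1)$. Once these are pinned down, matching the fiber size $2^{|P|+1}$ and the index set against the definition of the peak function is immediate.
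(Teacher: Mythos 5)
Your proposal is correct and follows essentially the same route as the paper's proof: both arguments directly enumerate the markings of $Q$, observe that the descent status of each position $i$ is controlled by a single mark (on $i$ when $i\in\Des_{\uparrow}(Q)$ and on $i+1$ otherwise), identify the peaks as exactly the doubly-constrained marks --- forcing $\Peak_{\uparrow}(Q)\subseteq D\triangle(D+1)$ --- and count $|\Peak_{\uparrow}(Q)|+1$ free marks, hence a fiber of size $2^{|\Peak_{\uparrow}(Q)|+1}$ over each admissible $D$. Your explicit preliminary check that every one of the $2^n$ markings of $Q$ lies in $\SMPCT(\alpha)$ makes precise a point the paper leaves implicit, and your ``bit governs position'' bookkeeping is a cleaner packaging of the paper's interval-by-interval construction between consecutive peaks, but the underlying counting is the same.
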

\begin{proof}
We will first show that if $S\in \SMPCT(\alpha)$ and $\unmark(S)=Q$, then 
$\Peak_{\uparrow}(Q)\subseteq \Des(S) \triangle (\Des(S)+1)$. 
Secondly, we will show that for any set $D\subset [n-1]$ such that $\Peak_{\uparrow}(Q)\subseteq D\triangle (D+1)$, there are precisely $2^{|\Peak_{\uparrow}(Q)|+1}$ elements $S$ of $\SMPCT(\alpha)$ such that $\unmark(S)=Q$ and $\Des(S)=D$. 
For the first statement, suppose $i\in \Peak_{\uparrow}(Q)$. Then $i\in \Des_{\uparrow}(Q)$ and $i-1\notin \Des_{\uparrow}(Q)$, so $i$ is strictly below $i+1$ and weakly above $i$. Therefore, if $i$ is marked in $S$ then $i\notin \Des(S)$ and $i-1\in \Des(S)$, whereas if $i$ is unmarked in $S$, then $i\in \Des(S)$ and $i-1\notin \Des(S)$. Either way, $i\in \Des(S) \triangle (\Des(S)+1)$.

For the second statement, suppose $\Peak_{\uparrow}(Q) = \{j_1 < \cdots < j_r\}$. Let $D$ be a subset of $[n-1]$ such that $\Peak_{\uparrow}(Q)\subseteq D\triangle (D+1)$. We will construct each $S\in \SMPCT(\alpha)$ such that $\unmark(S)=Q$ and $\Des(S)=D$. Begin with $Q$. 

Let $1\le k <r$ and consider the entries of $Q$ from $[j_k+1, j_{k+1}]$. By the definition of peak set, there is some $0\le h \le j_{k+1}-2$ such that the first $h$ entries in $[j_k+1, j_{k+1}]$ are descents of $Q$, and the remaining entries are not descents of $Q$. For entries $p$ of $Q$ such that $p\in [j_k+1, h]$, mark $p$ if and only if $p\notin D$. For entries $p$ of $Q$ such that $p\in [h+1, j_{k+1}-1]$, mark $p+1$ if and only if $p\in D$. For $h+1$ itself, we may choose whether to mark it or leave it unmarked. 

We now confirm that for $\Des(S)$ to agree with $D$ on the entries from $[j_k+1, j_{k+1}]$, this choice of marking was forced for all entries except $h+1$, where it was optional. For entries $p$ of $Q$ such that $p\in [j_k+1, h]$, we have $p$ strictly below $p+1$. Therefore, $p\in \Des(S)$ if and only if $p$ is unmarked. For entries $p$ of $Q$ such that $p\in [h+1, j_{k+1}-1]$, we have $p$ weakly above $p+1$, and therefore $p\in \Des(S)$ if and only if $p+1$ is marked. In $Q$, the $h+1$ appears strictly above $h$ and weakly below $h+2$. Therefore, marking or unmarking $h$ does not affect whether $h$ or $h+1$ are descents of $S$, regardless of whether $h$ or $h+2$ are marked. 
We also confirm $j_{k+1}\in \Des(S)$ if and only if $j_{k+1}\in D$. Note that $j_{k+1}$ is in $\Des_{\uparrow}(Q)$. If $j_{k+1}-1\in D$ then we have marked $j_{k+1}$, which implies that $j_{k+1}\notin \Des(S)$. However, since $\Peak_{\uparrow}(Q)\subseteq D\triangle (D+1)$, if $j_{k+1}-1\in D$ then $j_{k+1}\notin D$, as required. If on the other hand $j_{k+1}-1\notin D$ then we have left $j_{k+1}$ unmarked, which implies $j_{k+1}\in \Des(S)$. But since $j_{k+1}-1\notin D$ we have $j_{k+1}\in D$, as required.

Apply the same rule to the entries of $Q$ from $[j_r+1, n]$ and also to those from $[1, j_1-1]$. Note that $1\notin \Des_{\uparrow}(Q)$ by $(\SPCT3)$, and thus none of the entries from $[1, j_1-1]$ are in $\Des_{\uparrow}(Q)$, so we take $h=1$ in this case. Since there are $|\Peak_{\uparrow}(Q)|+1$ entries that we can choose to be marked or unmarked, there are precisely $2^{|\Peak_{\uparrow}(Q)|+1}$ tableaux $S\in \SMPCT(\alpha)$ that satisfy $\Des(S)=D$ and $\unmark(S)=Q$.  
\end{proof}

As an immediate consequence of Proposition~\ref{prop:Fexpansion} and  Lemma~\ref{lem:unmark}, we obtain our desired formula. 

\begin{theorem}\label{qsqfpeaks}
Let $\alpha$ be a peak composition of $n$. Then
\[\tilde{Q}_\alpha = \sum_{T\in \SPCT(\alpha)}K_{\comp_n(\Peak_{\uparrow}(T))}.\]
\end{theorem}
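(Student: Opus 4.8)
The plan is to obtain the formula by reindexing the expansion from Proposition~\ref{prop:Fexpansion} according to the fibers of the $\unmark$ map, and then applying Lemma~\ref{lem:unmark} fiberwise. Since every $S \in \SMPCT(\alpha)$ satisfies $\unmark(S) \in \SPCT(\alpha)$, and $\SPCT(\alpha)$ is precisely the subset of $\SMPCT(\alpha)$ with no marked entries, the map $\unmark \colon \SMPCT(\alpha) \to \SPCT(\alpha)$ is a well-defined surjection whose fibers $\{S \in \SMPCT(\alpha) : \unmark(S) = Q\}$, indexed by $Q \in \SPCT(\alpha)$, partition $\SMPCT(\alpha)$.

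First I would rewrite the sum from Proposition~\ref{prop:Fexpansion} by grouping terms according to the value of $\unmark(S)$:
\[
\tilde{Q}_\alpha \;=\; \sum_{S \in \SMPCT(\alpha)} F_{\comp_n(\Des(S))} \;=\; \sum_{Q \in \SPCT(\alpha)} \;\; \sum_{\substack{S \,:\, \unmark(S) = Q \\ S \in \SMPCT(\alpha)}} F_{\comp_n(\Des(S))}.
\]
This step is justified solely by the fact that the fibers partition the index set, so no term is lost or double-counted. Next I would apply Lemma~\ref{lem:unmark} to each inner sum, replacing it with $K_{\comp_n(\Peak_{\uparrow}(Q))}$, to obtain
\[
\tilde{Q}_\alpha \;=\; \sum_{Q \in \SPCT(\alpha)} K_{\comp_n(\Peak_{\uparrow}(Q))},
\]
which is the claimed identity after renaming the index $Q$ to $T$.

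I do not expect a genuine obstacle here: all of the combinatorial content, namely the matching of descent-set data to peak-set data together with the power-of-two fiber count, has already been absorbed into Lemma~\ref{lem:unmark}. The only point requiring verification is that $\unmark$ really does fiber $\SMPCT(\alpha)$ over $\SPCT(\alpha)$, and this is immediate from the observation that $\SPCT(\alpha)$ is exactly the mark-free part of $\SMPCT(\alpha)$. Consequently the theorem follows directly as a combination of the two preceding results.
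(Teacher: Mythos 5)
Your argument is correct and is exactly the paper's proof: the paper states Theorem~\ref{qsqfpeaks} as an immediate consequence of Proposition~\ref{prop:Fexpansion} and Lemma~\ref{lem:unmark}, which is precisely the fiberwise regrouping over $\unmark$ that you spell out. Your extra remark that $\unmark$ surjects onto $\SPCT(\alpha)$ with fibers partitioning $\SMPCT(\alpha)$ is a correct (and harmless) elaboration of what the paper leaves implicit.
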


\begin{example}
Building on Example \ref{SPCTex}, we have \[\tilde{Q}_{(3,3)} = K_{(3,3)}+K_{(2,2,2)}+K_{(2,3,1)}+K_{(2,4)}.\]
\end{example}

\section{Expanding into peak Young quasisymmetric Schur functions}\label{sec:mainexpansion}
The main goal of this section is to prove a formula for the expansion of quasisymmetric Schur $Q$-functions into the peak Young quasisymmetric Schur functions defined in \cite{Searles:0HC}. We begin by introducing these functions.

\begin{definition}\cite[Definition 5.6]{Searles:0HC}
Let $\alpha$ be a peak composition of $n$. A \emph{standard peak Young composition tableau} ($\SPYCT$) of shape $\alpha$ is a filling $T$ of $D_\alpha$ with entries from $[n]$, each used once, satisfying
\begin{enumerate}[leftmargin=2.3cm]
    \item[$\SPYCT1$:] Entries in each row increase from left to right.
    \item[$\SPYCT2$:] Entries in the first column increase from bottom to top.
    \item[$\SPYCT3$:] For all $k\le n$, the subdiagram of $D_\alpha$ consisting of the boxes that contain entries less than or equal to $k$ in $T$, is the diagram of a peak composition.
    \item[$\SPYCT4$:] If $T(c,r)<T(c+1,r')$ for some $c$ and some $r'<r$, then box $(c+1,r)$ must be in $T$, and $T(c+1,r)<T(c+1,r')$. 
\end{enumerate}
Let $\SPYCT(\alpha)$ denote the set of all standard peak Young composition tableaux of shape $\alpha$.
\end{definition}

\begin{definition}\label{def:DesSPYCT}
For $T\in \SPYCT(\alpha)$, the descent set $\Des_{\leftarrow}(T)$ of $T$ is the set \[\Des_{\leftarrow}(T) = \{i\in T:i+1\text{ is weakly left of }i\text{ in }T\}.\]
\end{definition}
 Note the difference between this notion of descent set and the descent sets for elements of $\SPCT(\alpha)$ (Definition~\ref{def:DesSPCT}). 

We use the following formula as our definition for peak Young quasisymmetric Schur functions.

\begin{definition}\cite[Theorem 5.9]{Searles:0HC}\label{def:PYQS}
Let $\alpha$ be a peak composition of $n$. The \emph{peak Young quasisymmetric Schur function} $\tilde{S}_\alpha$ is defined by
\[\tilde{S}_\alpha = \sum_{T\in \SPYCT(\alpha)}K_{\comp_n(\Peak_{\leftarrow}(T))},\]
where $\Peak_{\leftarrow}(T)$ denotes $\Peak(\Des_{\leftarrow}(T))$.  
\end{definition}

The peak Young quasisymmetric Schur functions form a basis for the peak algebra \cite[Theorem 5.11]{Searles:0HC}.

\begin{example}
Let $\alpha = (3,3) \vDash 6$. We have
\[\SPYCT(3,3) = \left\{  \begin{array}{c@{\hskip \cellsize}c@{\hskip \cellsize}c@{\hskip \cellsize}c@{\hskip \cellsize}c@{\hskip \cellsize}c} \tableau{4&5&6\\1&2&3}~,~\tableau{3&5&6\\1&2&4}~,~\tableau{3&4&5\\1&2&6}  \end{array}\!\!\!\!\!\right\}.\]
These have descent (and peak) sets $\{3\},\{2,4\}$, and $\{2,5\}$ respectively. Therefore 
\[\tilde{S}_{(3,3)} = K_{(3,3)}+K_{(2,2,2)}+K_{(2,3,1)}.\]
\end{example}

In order to prove a formula for expanding quasisymmetric Schur $Q$-functions into peak Young quasisymmetric Schur functions, we will make use of an insertion algorithm defined by Allen, Hallam and Mason \cite{AHM}.

Let $\alpha$ be a composition of $n$. A \emph{standard immaculate tableau} ($\SIT$) of shape $\alpha$ is a filling of $D_\alpha$ with entries from $[n]$, each used once, that satisfies $\SPCT1$ and $\SPCT2$. A \emph{standard Young composition tableau} ($\SYCT$) of shape $\alpha$ is a filling of $D_\alpha$ with entries from $[n]$, each used once, that satisfies $\SPYCT1$, $\SPYCT2$ and $\SPYCT4$. Let $\SIT(\alpha)$ and $\SYCT(\alpha)$ denote the sets of, respectively, standard immaculate tableaux and standard Young composition tableaux of shape $\alpha$.

In particular, when $\alpha$ is a peak composition, we have $\SPCT(\alpha)\subseteq \SIT(\alpha)$ and $\SPYCT(\alpha)\subseteq \SYCT(\alpha)$. 

The (immaculate) descent set $\Des_\uparrow(T)$ of $T\in \SIT(\alpha)$ is defined identically to the descent set of an element of $\SPCT(\alpha)$ (Definition~\ref{def:DesSPCT}), and the descent set $\Des_\leftarrow(T)$ of $T\in \SYCT(\alpha)$ is defined identically to the descent set of an element of $\SPYCT(\alpha)$ (Definition~\ref{def:DesSPYCT}). The \emph{dual immaculate function} $\mathfrak{S}_\alpha$ \cite{BBSSZ:dualimmaculate} and \emph{Young quasisymmetric Schur function} $\mathcal{S}_\alpha$ \cite{HLMvW11:QS, LMvWbook} are defined by
\[\mathfrak{S}_\alpha = \sum_{T\in \SIT(\alpha)}F_{\comp_n(\Des_{\uparrow}(T))} \qquad \mbox{ and } \qquad \mathcal{S}_\alpha = \sum_{T\in \SYCT(\alpha)}F_{\comp_n(\Des_{\leftarrow}(T))}.\]

A formula for the expansion of dual immaculate functions in Young quasisymmetric Schur functions was obtained in \cite[Theorem 1.1]{AHM} using the insertion algorithm described below; in particular, this insertion can be used to map an $\SIT$ to an $\SYCT$ in a way that preserves the descent set.  
For any filling $T$ of $D(\alpha)$ with entries from $\{1,2, \ldots \}$, the \emph{augmented diagram} $\Bar{T}$ of $T$ is the filling obtained by adjoining an additional box to the rightmost end of each row, and placing $\infty$ in this box. 

\begin{procedure}\cite[Procedure 3.1]{AHM}\label{proc:insert}
Let $T$ be a filling of $D_\alpha$ with entries from $\{1,2, \ldots \}$ in which entries weakly increase along rows, strictly increase up the first column, and satisfy $\SPYCT4$. Let $k$ be a positive integer that does not appear in $T$. Let $(c_1,d_1),(c_2,d_2),\ldots $ be the boxes of $\Bar{T}$ listed in order from the highest box in the rightmost column to the lowest box in the leftmost column, proceeding down each column from right to left. 
Set $k_0\coloneqq k$, and let $i$ be the smallest positive integer such that $\Bar{T}(c_i-1,d_i)\le k_0<\Bar{T}(c_i,d_i)$. If $i$ exists, there are two cases. 

\textbf{Case 1:} If $\Bar{T}(c_i,d_i)=\infty$,  place $k_0$ in box $(c_i,d_i)$ and terminate the procedure.

\textbf{Case 2:} If $\Bar{T}(c_i,d_i) \neq \infty$, set $k:= \Bar{T}(c_i,d_i)$, place $k_0$ in box $(c_i,d_i)$, and repeat the procedure by inserting $k$ into the sequence of boxes $(c_{i+1},d_{i+1}),(c_{i+2},d_{i+2}),\dots$. We say that $\Bar{T}(c_i,d_i)$ is \emph{bumped}. 

If no such $i$ exists, begin a new row containing only $k_0$ in the highest position in the first column such that all other entries in the first column below $k_0$ are smaller than $k_0$, and terminate the procedure. If the row created is not the top row of the diagram, shift all higher rows up by 1.
\end{procedure}

We will only apply Procedure~\ref{proc:insert} to fillings $T$ that satisfy $\SPYCT1$, $\SPYCT2$ and $\SPYCT4$. The filling resulting from such an application of Procedure~\ref{proc:insert} will also satisfy these conditions, as noted in \cite{AHM}.

\begin{example}
    Below we demonstrate inserting the entry $10$ into an augmented diagram using Procedure~\ref{proc:insert}.

    \[10 \longrightarrow \begin{array}{c@{\hskip \cellsize}c@{\hskip \cellsize}c@{\hskip \cellsize}c@{\hskip \cellsize}c@{\hskip \cellsize}c}\tableau{8&9&11&\infty\\5&6&12&\infty\\3&4&\infty}\end{array} = \,\, \begin{array}{c@{\hskip \cellsize}c@{\hskip \cellsize}c@{\hskip \cellsize}c@{\hskip \cellsize}c@{\hskip \cellsize}c}\tableau{8&9&10\\5&6&11\\3&4&12}\end{array} \]
\end{example}

For $T\in \SIT(\alpha)$, the \emph{reading word} $\rw(T)$ of $T$ is the sequence obtained by reading the entries of $T$ from left to right in each row, reading the rows in order from top to bottom. In \cite[Section 3]{AHM}, Allen, Hallam and Mason use their insertion algorithm to define a map that takes (the reading word of) a standard immaculate tableau $T$ to a pair consisting of an $\SYCT$ $P$ and a recording tableau $Q$, which we reproduce here and call \emph{reading insertion}. 

\begin{definition}[\emph{Reading insertion}, \cite{AHM}]
 Begin with $(P,Q)=(\emptyset,\emptyset)$, where $\emptyset$ is the empty filling. Let  $\rw(T)=k_1k_2 \cdots k_n$. Insert $k_1$ into $P$ using the insertion defined in Procedure~\ref{proc:insert}, and let $P_1$ be the resulting filling. Record the location where the new box was created by placing a $1$ in $Q$ in the corresponding position, and let $Q_1$ be the resulting filling. Now suppose that the first $j-1$ letters of $\rw(T)$ have been inserted. Insert $k_j$ into $P_{j-1}$ and let $P_j$ be the resulting filling. Place the letter $j$ into the box of $Q_{j-1}$ that corresponds to the box created in $P_j$ by this insertion, and let $Q_j$ be the resulting filling. Once $P_n$ and $Q_n$ are computed, set $P=P_n$ and $Q=Q_n$. The \emph{reading insertion} of $T$ is the pair $(P,Q)$.
\end{definition}

The recording tableau $Q$ produced by the reading insertion of a standard immaculate tableau is called a dual immaculate recording tableau (\DIRT). In \cite{AHM}, $\DIRT$s are characterised as follows. A \emph{row strip} of length $j$ in $Q$ is a maximal increasing sequence $a_1,a_2,\ldots,a_j$ of $j$ consecutive integers such that for all $1\leq i  < j$, $a_{i+1}$ appears strictly right of $a_i$ in $Q$.

\begin{definition}\cite[Definition 3.9]{AHM}
Let $\alpha$ be a composition of $n$. A filling $Q$ of $D_\alpha$ with entries from $[n]$, each used once, is a \emph{dual immaculate recording tableau} ($\DIRT$) if it satisfies the following properties:
\begin{enumerate}[leftmargin=2cm]
    \item[$\DIRT1$:] Entries in each row increase from left to right.
    \item[$\DIRT2$:] The row strips start in the first column.
    \item[$\DIRT3$:] Entries in the first column increase from top to bottom. 
    \item[$\DIRT4$:] If $Q(c,r)>Q(c,r')$ for some $c$ and some $r'<r$, then $Q$ has an entry in box $(c+1,r')$, and $Q(c,r)>Q(c+1,r')$.
\end{enumerate}
\end{definition}

The \textit{row strip shape} of a $\DIRT$ $Q$ is the composition $(\alpha_1,\ldots ,\alpha_k)$ where $\alpha_i$ is the length of the row strip that starts with $\alpha_1+\alpha_2+\cdots +\alpha_{i-1}+1$.

\begin{example}
The filling $Q$ shown below is a $\DIRT$ with row strips $\{1,2\},\{3,4,5\},\{6,7,8,9\}$, and so has row strip shape $(2,3,4)$.
\[Q=\begin{array}{c@{\hskip \cellsize}c@{\hskip \cellsize}c@{\hskip \cellsize}c@{\hskip \cellsize}c@{\hskip \cellsize}c}\tableau{1&2\\3&4&5&8&9\\6&7}\end{array}\]
\end{example}

\begin{example}\label{expansion} 
We illustrate the reading insertion map by applying it to the elements of $\SPCT(3,2)$. We have
\[\SPCT(3,2) = \left\{ T_1 =\tableau{4&5\\1&2&3}~,~T_2 = \tableau{3&5\\1&2&4}~,~T_3 = \tableau{3&4\\1&2&5} \right\}\]
The reading insertion of these tableaux gives  
    \begin{align*}
    \rw(T_1) = 45123 \mapsto\left(\begin{array}{c@{\hskip \cellsize}c@{\hskip \cellsize}c@{\hskip \cellsize}c@{\hskip \cellsize}c@{\hskip \cellsize}c}\tableau{4&5\\1&2&3} ~,~ \tableau{1&2\\3&4&5} \end{array}\!\!\!\!\!\right)
    \\
    \rw(T_2) = 35124 \mapsto\left(\begin{array}{c@{\hskip \cellsize}c@{\hskip \cellsize}c@{\hskip \cellsize}c@{\hskip \cellsize}c@{\hskip \cellsize}c}\tableau{3&5\\1&2&4} ~,~ \tableau{1&2\\3&4&5} \end{array}\!\!\!\!\!\right)
    \\
    \rw(T_3) = 34125 \mapsto\left(\begin{array}{c@{\hskip \cellsize}c@{\hskip \cellsize}c@{\hskip \cellsize}c@{\hskip \cellsize}c@{\hskip \cellsize}c}\tableau{3&4&5\\1&2} ~,~ \tableau{1&2&5\\3&4} \end{array}\!\!\!\!\!\right)
    \end{align*}
Notice that in each case, the filling $P$ created by insertion of the reading word of an $\SPCT$ is in fact an $\SPYCT$. Shortly, we will show this is true in general.
\end{example}

\begin{theorem}\cite{AHM}\label{thm:insertionofT}
Let $\alpha$ be a composition of $n$. The reading insertion map is a bijection between $\SIT(\alpha)$ and pairs $(P,Q)$ where $P$ is an $\SYCT$, $Q$ is a $\DIRT$ with row strip shape $\alpha^{\rev}$, and $P$ and $Q$ have the same shape. Moreover, if $P$ is the $\SYCT$ associated to $T\in \SIT(\alpha)$, then $\Des_{\uparrow}(T) = \Des_{\leftarrow}(P)$.
\end{theorem}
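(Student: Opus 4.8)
The plan is to follow the standard strategy for a Robinson--Schensted--Knuth-type correspondence: first show the map is well defined (both output tableaux have the claimed properties), then construct an explicit inverse via reverse insertion, and finally verify the descent-preservation statement, which is the genuine combinatorial content. I would treat the two output tableaux separately before combining.

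First I would check that the insertion tableau $P$ is always an $\SYCT$. Since the empty filling trivially satisfies $\SPYCT1$, $\SPYCT2$ and $\SPYCT4$, and each application of Procedure~\ref{proc:insert} preserves these three conditions (as recorded immediately before the theorem statement), a straightforward induction on the length of $\rw(T)$ shows that $P=P_n$ is an $\SYCT$; by construction $P_j$ and $Q_j$ share the same shape at every stage. Next I would show $Q$ is a $\DIRT$ of row strip shape $\alpha^{\rev}$. The key point is that $\rw(T)$ is the concatenation of the rows of $T$ read from top to bottom, and each such row is a strictly increasing run because $T$ satisfies $\SPCT1$. The crucial lemma is that inserting a strictly increasing run into an $\SYCT$ creates a sequence of new boxes that moves strictly rightward and begins in the first column. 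This produces exactly one row strip of $Q$ per row of $T$, giving $\DIRT1$ and $\DIRT2$; since the rows are read top to bottom their lengths appear in the order $\alpha_{\ell(\alpha)},\alpha_{\ell(\alpha)-1},\ldots,\alpha_1$, yielding row strip shape $\alpha^{\rev}$. I would then verify $\DIRT3$ and $\DIRT4$ by tracking how each newly created box sits relative to the boxes recorded earlier, translating the first-column and $\SPYCT4$-type geometry of the growing shapes directly into these conditions.

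I would then construct the inverse map. Given a valid pair $(P,Q)$, I locate the box of $Q$ containing the largest entry $n$; the box of $P$ in the corresponding position was the last box created, so I reverse-bump from that box through Procedure~\ref{proc:insert} to recover the final letter of $\rw(T)$ together with a smaller tableau $P_{n-1}$, delete $n$ from $Q$, and iterate. Showing that this reverse procedure is well defined and undoes a single insertion step at a time establishes that the map is a bijection; here the row strip shape constraint on $Q$ is what guarantees the recovered word, reassembled into rows according to the row strips, is the reading word of a genuine $\SIT$ of shape $\alpha$.

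The main obstacle is the descent-preservation claim $\Des_{\uparrow}(T)=\Des_{\leftarrow}(P)$. My approach would be to reduce both sides to the relative order of $i$ and $i+1$ in $\rw(T)$. Reading rows top to bottom and entries left to right within a row shows directly that $i\in\Des_{\uparrow}(T)$ exactly when $i+1$ precedes $i$ in $\rw(T)$. It then remains to prove the insertion analogue: that $i\in\Des_{\leftarrow}(P)$, i.e.\ $i+1$ lies weakly left of $i$ in $P$, precisely when $i+1$ is inserted before $i$. The hard part is a lemma controlling, throughout the insertion of the letters lying between $i$ and $i+1$ in the word, the relative column positions of the boxes eventually occupied by $i$ and $i+1$; one must show that bumping by values outside $\{i,i+1\}$ never reverses their weak-left versus strictly-right relationship, so that the relation is fixed at the moment the second of the two is inserted. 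This is exactly where the precise bumping rule of Procedure~\ref{proc:insert} and the condition $\SPYCT4$ have to be used with care.
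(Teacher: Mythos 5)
This statement is imported verbatim from \cite{AHM}; the paper gives no proof of it, so there is no internal argument to compare yours against. Measured against the source, your outline follows essentially the same route as \cite{AHM}: well-definedness of the pair $(P,Q)$, invertibility by reverse bumping, and reduction of both descent conditions to the relative order of $i$ and $i+1$ in $\rw(T)$. Your observation that $i\in\Des_{\uparrow}(T)$ exactly when $i+1$ precedes $i$ in $\rw(T)$ is correct (it uses that $i+1$ cannot sit left of $i$ in the same row), and you correctly identify where the real work lies.

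Two cautions. First, your ``crucial lemma'' is overstated: inserting a strictly increasing run into an $\SYCT$ does \emph{not} in general create a chain of new boxes beginning in the first column --- a run of values larger than every entry of $P$ would simply append to the end of an existing row under Procedure~\ref{proc:insert}. What forces each row strip of $Q$ to start in column $1$ is that the first letter of each row of $\rw(T)$ is smaller than every previously inserted letter, which follows from $T$ satisfying the row and first-column conditions of an $\SIT$ together with the top-to-bottom reading order; only the strictly-rightward movement within a run is the content of \cite[Lemma~3.5]{AHM}. Stated as you have it, the $\DIRT2$ verification would not go through. Second, the two genuinely hard steps --- the lemma that insertions of letters between $i$ and $i+1$ in the word never reverse the weak-left/strictly-right relation that determines $\Des_{\leftarrow}(P)$, and the verification that reverse insertion of a pair $(P,Q)$ with the stated row strip shape always reassembles into the reading word of an $\SIT$ of shape $\alpha$ --- are flagged but not carried out. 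These are precisely the substantive lemmas of \cite{AHM}, so as written the proposal is a faithful plan rather than a complete proof.
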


Now we restrict the domain of this bijection to $\SPCT(\alpha)$. Note that since the bijection preserves descent sets, it also preserves peak sets. We now state the peak analogue of Theorem~\ref{thm:insertionofT}, the proof of which will rely on Propositions \ref{prop:TisSPYCT} and \ref{prop:notspctnotspyct}.

\begin{theorem}\label{bijection}
Let $\alpha$ be a peak composition of $n$. The reading insertion map is a bijection between $\SPCT(\alpha)$ and pairs $(P,Q)$ where $P$ is an $\SPYCT$, $Q$ is a $\DIRT$ with row strip shape $\alpha^{\rev}$, and $P$ and $Q$ have the same shape. Moreover, if $P$ is the $\SPYCT$ associated to $T\in \SPCT(\alpha)$, then $\Peak_{\uparrow}(T) = \Peak_{\leftarrow}(P)$.
\end{theorem}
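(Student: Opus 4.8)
The plan is to leverage Theorem~\ref{thm:insertionofT} as a black box and reduce the peak statement to two containments. Since $\SPCT(\alpha)\subseteq\SIT(\alpha)$ and the reading insertion map is already a descent-preserving bijection onto pairs $(P,Q)$ with $P$ an $\SYCT$ and $Q$ a $\DIRT$ of row strip shape $\alpha^{\rev}$, the only thing that needs verification is that \emph{restricting} the domain to $\SPCT(\alpha)$ corresponds exactly to restricting the codomain so that $P$ is an $\SPYCT$. The $\DIRT$ $Q$ is unaffected by this restriction, and the descent-preservation (hence peak-preservation, since peak sets are computed from descent sets) is inherited directly from Theorem~\ref{thm:insertionofT}. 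So the entire content of Theorem~\ref{bijection} is the claim that, for the image pairs, $P$ is an $\SPYCT$ \emph{if and only if} the source $T$ is an $\SPCT$.

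To establish this equivalence I would prove the two directions separately, and this is exactly what the forward references to Propositions~\ref{prop:TisSPYCT} and \ref{prop:notspctnotspyct} suggest. First I would show the forward direction: if $T\in\SPCT(\alpha)$, then its insertion tableau $P$ is an $\SPYCT$ (this is Proposition~\ref{prop:TisSPYCT}, foreshadowed by Example~\ref{expansion}). Since $P$ is already an $\SYCT$, it satisfies $\SPYCT1,\SPYCT2,\SPYCT4$ automatically; the only additional axiom to verify is $\SPYCT3$, the peak-shape condition. The key observation is that $\SPYCT3$ and $\SPCT3$ are \emph{identical} conditions (both assert that every initial-segment subdiagram is the diagram of a peak composition). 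Thus I would track how the subdiagram of entries $\le k$ evolves through the insertion: since $T$ satisfies $\SPCT3$, every such subdiagram of $T$ is a peak shape, and I would argue that insertion of the reading word preserves this property step by step, so that the corresponding subdiagrams of $P$ are also peak shapes. Second, for the converse I would prove the contrapositive (Proposition~\ref{prop:notspctnotspyct}): if $T\in\SIT(\alpha)$ is \emph{not} an $\SPCT$, then its insertion tableau $P$ is not an $\SPYCT$. By $\SPCT3$ failing, $T$ must contain two equal-length rows one directly atop another at some truncation stage, i.e.\ a configuration producing a non-peak subdiagram; I would trace how this bad configuration forces a violation of $\SPYCT3$ in $P$.

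Assembling these, the proof of Theorem~\ref{bijection} is short: by Theorem~\ref{thm:insertionofT} the reading insertion map is a bijection from $\SIT(\alpha)$ to pairs $(P,Q)$ with $P$ an $\SYCT$ and $Q$ a $\DIRT$ of row strip shape $\alpha^{\rev}$. Proposition~\ref{prop:TisSPYCT} shows the image of $\SPCT(\alpha)$ lands in pairs with $P$ an $\SPYCT$, and Proposition~\ref{prop:notspctnotspyct} shows that the complement $\SIT(\alpha)\setminus\SPCT(\alpha)$ maps to pairs with $P$ not an $\SPYCT$. Hence the restriction of the bijection to $\SPCT(\alpha)$ is precisely a bijection onto pairs $(P,Q)$ with $P$ an $\SPYCT$ and $Q$ a $\DIRT$ of row strip shape $\alpha^{\rev}$, and of the same shape as $P$. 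The peak-set equality $\Peak_{\uparrow}(T)=\Peak_{\leftarrow}(P)$ then follows from the descent-set equality $\Des_{\uparrow}(T)=\Des_{\leftarrow}(P)$ in Theorem~\ref{thm:insertionofT} by applying $\Peak(\cdot)$ to both sides.

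I expect the main obstacle to lie in Proposition~\ref{prop:TisSPYCT}, specifically in verifying that insertion preserves $\SPYCT3$. The difficulty is that $\SPCT3$ is a global, ``every truncation'' condition, whereas Procedure~\ref{proc:insert} acts through a sequence of bumps that can rearrange rows and even create new rows partway up the diagram; one must carefully argue that no intermediate insertion step can create a subdiagram of entries $\le k$ that fails to be a peak composition, for any $k$. Controlling how the bumping chain interacts with the first-column and row-length constraints that define a peak shape—in particular ruling out the creation of a length-$1$ non-final row at any truncation level—is the technical crux, and I would isolate it as the content of the supporting proposition rather than attempting it inline.
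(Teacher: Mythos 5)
Your proof of Theorem~\ref{bijection} itself is correct and follows essentially the same route as the paper: restrict the descent-preserving bijection of Theorem~\ref{thm:insertionofT} to $\SPCT(\alpha)$, use Propositions~\ref{prop:TisSPYCT} and \ref{prop:notspctnotspyct} to show this restriction hits exactly the pairs $(P,Q)$ with $P$ an $\SPYCT$, and apply $\Peak(\cdot)$ to the descent-set equality. The one place you diverge is in how you anticipate proving Proposition~\ref{prop:TisSPYCT}: the ``technical crux'' you isolate --- tracking every truncation subdiagram through the bumping chains --- is not how the paper proceeds, and would indeed be delicate. Instead the paper proves Lemma~\ref{col12same}, that the first two columns of $P$ coincide with those of $T$, and observes that for a filling already known to be an $\SYCT$ the condition $\SPYCT3$ (equivalently $\SPCT3$) is determined entirely by the first two columns; the proposition is then immediate, with no need to control intermediate insertion states. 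If you pursue your version, be aware that intermediate fillings $P_j$ genuinely can fail the peak-shape condition mid-insertion (a new row of length $1$ is created every time a first-column entry of $T$ is inserted), so a literal ``insertion preserves $\SPYCT3$ step by step'' argument would fail as stated; the first-two-columns lemma is the cleaner invariant to aim for.
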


\begin{lemma}\label{col12same}
Let $T\in \SPCT(\alpha)$. If the image of $T$ under reading insertion is $(P,Q)$, then the first two columns of $P$ are identical to the first two columns of $T$. 
\end{lemma}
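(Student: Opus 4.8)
The plan is to run an induction over the steps of the reading insertion of $\rw(T)=k_1\cdots k_n$, maintaining as invariant that after each step the first two columns of the current filling agree with the first two columns of $T$ restricted to the entries inserted so far. The one piece of structure I need from $\SPCT$ is a consequence of $\SPCT3$: writing $b_r=T(1,r)$ and $s_r=T(2,r)$, at the moment $b_r$ is added the cells with smaller entries must form a peak composition, which forces row $r-1$ to already have length at least $2$; hence $s_{r-1}<b_r$, and together with $b_r<s_r$ this gives the single increasing chain $b_1<s_1<b_2<s_2<\cdots<b_\ell$ (with $b_\ell<s_\ell$ when the top row is long). Because $\rw(T)$ lists the rows from top to bottom and each row from left to right, the first-column entries are inserted in the decreasing order $b_\ell,\dots,b_1$, and each $s_r$ is inserted immediately after $b_r$.

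First I would settle the first column. A cell in column $1$ is never an insertion site, since the missing cell to its left makes the sandwiching inequality fail; thus first-column entries are never bumped. When $b_r$ is inserted, every entry already present belongs to a row of $T$ above row $r$, so by the chain it exceeds $b_{r+1}>b_r$; consequently $b_r$ finds no sandwich cell and opens a new row at the very bottom. The first column is therefore assembled as $b_\ell$ (inserted first, ending on top) down to $b_1$ (inserted last, at the bottom), reproducing the first column of $T$. The same count shows that when $s_r$ is inserted only the freshly placed $b_r$ is smaller than it, so the unique admissible cell is the $\infty$ at the end of $b_r$'s row; hence $s_r$ lands in column $2$ of that (currently bottom) row, as required.

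The heart of the matter is the stability claim: an entry $T(c,r)$ with $c\ge 3$ must neither be placed into columns $1$ or $2$ nor bump an entry out of them. Two facts drive this. In the scan order every cell in a column $\ge 3$ precedes every cell in column $2$, so a value reaches column $2$ only after failing to find an admissible cell throughout columns $\ge 3$. Moreover, when $T(c,r)$ is inserted the bottom row is still the row begun by $b_r$, its second entry $s_r$ is the minimum of all second-column entries present, and $T(c,r)>s_r$; the same holds for every value produced during the insertion, since bumped values strictly increase. Any such value therefore fits somewhere in columns $\ge 3$ of the bottom row, which supplies an admissible column-$\ge 3$ cell; combined with the scan-order fact, each placement occurs in a column $\ge 3$ and columns $1,2$ are left intact.

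I expect the main obstacle to be making this last step rigorous, because of the bumping cascade. After a bump the scan resumes partway through the diagram and then moves downward and leftward, i.e.\ toward column $2$, so one must verify that an admissible column-$\ge 3$ cell still lies ahead of column $2$ in the resumed scan rather than having already been passed. I would control this by an induction on the cascade that tracks the column of each successive bump, using the minimality of $s_r$ in the bottom row (so that the bottom row, scanned last within each column, catches any escaping value in a column $\ge 3$) together with the row-increase and $\SPYCT4$ conditions satisfied by the intermediate fillings. Pinning down this reachability is the delicate point on which the lemma rests.
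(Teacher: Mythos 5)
Your treatment of the entries that actually live in columns $1$ and $2$ of $T$ matches the paper's: the chain $b_1<s_1<b_2<s_2<\cdots$ forced by $\SPCT1$--$\SPCT3$ shows each $b_r$ is the minimum of everything inserted so far (so it opens a new bottom row) and each $s_r$ is the minimum of everything except $b_r$ (so its only admissible cell is the $\infty$ at the end of $b_r$'s row). That part is fine.

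The gap is exactly where you flag it: you never establish that the bumping cascade triggered by an entry $T(c,r)$ with $c\ge 3$ stays in columns $\ge 3$. Your proposed mechanism --- ``the bottom row, scanned last within each column, catches any escaping value in a column $\ge 3$'' --- does not close the argument, because the unique admissible bottom-row cell for a bumped value $v'$ sits in the column $c^*$ determined by $\bar{P}(c^*-1,\mathrm{bot})\le v'<\bar{P}(c^*,\mathrm{bot})$, and since $v'$ is larger than the value that bumped it, $c^*$ can lie strictly to the \emph{right} of the column where the bump occurred; that cell has then already been passed in the right-to-left scan and cannot catch $v'$. At that point you have no admissible column-$\ge 3$ cell guaranteed ahead of column $2$, and the induction you sketch on the cascade has nothing to run on. The paper avoids this entirely by citing \cite[Lemma 3.5]{AHM}: when an increasing sequence (a row of $T$) is inserted, each \emph{new box} is created strictly to the right of the previous new box, so the terminal box of the cascade started by $T(c,r)$, $c\ge 3$, lies in column $\ge 3$; since the scan moves right to left, a cascade that ever touched column $2$ could only terminate in columns $\le 2$, a contradiction. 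You should either import that lemma or prove an equivalent statement about the final resting box; without it the ``stability'' half of your proof is an identified but unresolved obstruction, not a proof.
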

\begin{proof}
Since entries increase along rows and up the first column of $T$, when the first entry in a row of $T$ is inserted, it is the smallest entry inserted thus far. It must therefore insert into the first column of $P$, creating a new row in $P$. Since each entry in the second column of $T$ is smaller than every entry strictly above it (by $\SPCT3$ and $\SPCT1$), when the second entry in a row of $T$ is inserted, it is the smallest entry inserted thus far except for the first entry in that row. Hence, it inserts into the new row of $P$ (in the second column) that was created by the insertion of the first entry of the corresponding row of $T$. By \cite[Lemma 3.5]{AHM}, when an increasing sequence of integers is inserted, the new box created by insertion of each integer is strictly right of the new box created by insertion of the previous integer. In particular, since entries increase along rows of $T$, when an entry of $T$ is inserted, the resulting new box of $P$ is in a column weakly right of the column of that entry in $T$. So, since entries in the first two columns of $T$ insert to the first two columns of $P$, no entry is ever bumped into the first or second column of $P$. Hence the first two columns of $T$ and $P$ are the same.
\end{proof}

\begin{proposition}\label{prop:TisSPYCT}
If $T\in \SPCT(\alpha)$, then the filling $P$ obtained by applying the reading insertion map to $T$ is an $\SPYCT$.
\end{proposition}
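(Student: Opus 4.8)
The plan is to observe that most of the required conditions come for free from the work of Allen, Hallam and Mason, so that only one condition remains. Since $\alpha$ is a peak composition we have $\SPCT(\alpha)\subseteq \SIT(\alpha)$, and hence Theorem~\ref{thm:insertionofT} applies to $T$ and guarantees that $P$ is an $\SYCT$; that is, $P$ already satisfies $\SPYCT1$, $\SPYCT2$ and $\SPYCT4$. Thus the entire content of the proposition reduces to verifying the peak-shape condition $\SPYCT3$ for $P$: for every $k\le n$, the subdiagram of boxes of $P$ with entries at most $k$ is the diagram of a peak composition.

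The key tool will be Lemma~\ref{col12same}, which tells us that the first two columns of $P$ coincide with those of $T$. I would first record the structural consequences of this. The first column of $P$, read from bottom to top, equals that of $T$, so $P$ has the same number of rows as $T$ and the first entry of row $r$ of $P$ equals the first entry of row $r$ of $T$ for every $r$; and the second columns agree, so row $r$ of $P$ has a box in column $2$ precisely when row $r$ of $T$ does, with equal entries there. Next I would translate $\SPYCT3$ into a statement about these first two columns. Since entries increase along rows (by $\SPYCT1$), the entries at most $k$ in any row form an initial segment, so in the level-$k$ subdiagram a row is nonempty exactly when its first entry is at most $k$, and has length exactly $1$ exactly when its first entry is at most $k$ while its second entry (if present) exceeds $k$. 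Because the first column increases up the diagram, the nonempty rows at level $k$ are the bottom rows $1,\dots,s$ for some $s$, with row $s$ the top nonempty row. By the agreement of the first two columns, the level-$k$ subdiagrams of $P$ and of $T$ have exactly the same nonempty rows and, within each, agree on whether a row has length $1$ or length at least $2$; in particular they share the same top nonempty row $s$.

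Finally I would invoke $\SPCT3$ for $T$: the level-$k$ subdiagram of $T$ is a peak composition, so any of its rows of length $1$ must be its top nonempty row. By the previous step, every length-$1$ row of the level-$k$ subdiagram of $P$ is a length-$1$ row of that of $T$, hence is row $s$, which is also the top nonempty row for $P$. Therefore the level-$k$ subdiagram of $P$ has all nonempty rows except possibly the topmost of length at least $2$, i.e.\ is a peak composition, establishing $\SPYCT3$ and completing the proof.

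I expect the only delicate point to be the bookkeeping that the top nonempty row is genuinely the same for $P$ and $T$, and more generally that "length $1$ versus length at least $2$ in the level-$k$ subdiagram" is detected entirely by the first two columns. This is exactly where the first- and second-column agreement from Lemma~\ref{col12same} is used; once that matching is in hand, the peak condition transfers from $T$ to $P$ level by level with no further computation.
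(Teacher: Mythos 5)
Your proposal is correct and follows essentially the same route as the paper: invoke Theorem~\ref{thm:insertionofT} to get $\SPYCT1$, $\SPYCT2$ and $\SPYCT4$, then use Lemma~\ref{col12same} together with the observation that the peak-shape condition $\SPYCT3$ (which coincides with $\SPCT3$) is determined entirely by the first two columns. The only difference is that you spell out the level-by-level bookkeeping that the paper leaves implicit.
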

\begin{proof}
Since $T\in \SPCT(\alpha)$, which is a subset of $\SIT(\alpha)$, we know $P$ is an $\SYCT$ by Theorem~\ref{thm:insertionofT}. That $P$ is an $\SPYCT$ then follows immediately from Lemma \ref{col12same}, since $\SPCT3$ and $\SPYCT3$ are the same condition, and whether this condition is satisfied or not is determined by the first two columns. 
\end{proof}
Note that since every $\SPYCT$ has peak shape, Proposition \ref{prop:TisSPYCT} also implies that the $\DIRT$ obtained by inserting the reading word of $T\in \SPCT(\alpha)$ is of peak shape. Now we show the converse of Proposition~\ref{prop:TisSPYCT}. 

\begin{proposition}\label{prop:notspctnotspyct}
If $T\in \SIT(\alpha)$ but $T\notin \SPCT(\alpha)$, then the filling $P$ obtained by applying the reading insertion map to $T$ is not an $\SPYCT$. 
\end{proposition}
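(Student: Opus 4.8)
The plan is to show that $P$ fails condition $\SPYCT3$. Since $T\in\SIT(\alpha)$, Theorem~\ref{thm:insertionofT} guarantees $P$ is an $\SYCT$, so $P$ is an $\SPYCT$ if and only if it additionally satisfies $\SPYCT3$; thus it suffices to exhibit a value $k$ for which the subdiagram of $P$ formed by the entries at most $k$ is not the diagram of a peak composition. Because $T\notin\SPCT(\alpha)$ while $\alpha$ is a peak composition, there is a smallest $k^{*}$ such that the subdiagram of $T$ on entries at most $k^{*}$ is not of peak shape; note $k^{*}<n$, since the full shape $\alpha$ is peak. Minimality forces this failure to be caused by a single box: there is a row $r^{*}$ with $T(1,r^{*})<T(1,r^{*}+1)=k^{*}<T(2,r^{*})$, the subtableau $T''$ of entries at most $k^{*}-1$ is an $\SIT$ all of whose subdiagrams are of peak shape (hence $T''\in\SPCT$), its top row $r^{*}$ consists of the single box $T(1,r^{*})$, and the subtableau $T'$ of entries at most $k^{*}$ is obtained from $T''$ by adjoining a new top row containing only the box $k^{*}$ in the first column.

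Next I record two facts. The opening lines of the proof of Lemma~\ref{col12same} --- that the first entry of each row of $T$ is the smallest entry inserted so far and therefore starts a new row in the first column --- use only $T\in\SIT(\alpha)$, not membership in $\SPCT(\alpha)$; tracking these first-column creations shows $P$ and $T$ have identical first columns, so $P(1,j)=T(1,j)$ for all $j$. The crucial claim is a restriction principle: the entries of $P$ that are at most $k^{*}$ occupy the bottom $r^{*}+1$ rows and, as a filling, coincide exactly with the insertion tableau of $\rw(T')$. (Here $\rw(T')$ is precisely the subword of $\rw(T)$ consisting of the letters at most $k^{*}$, since $T'$ is the lower-left subdiagram of those entries and the reading word is read top-to-bottom.) This is the analogue for Procedure~\ref{proc:insert} of the classical fact that Schensted insertion commutes with truncation to an initial segment of values: an inserted letter only ever bumps strictly larger letters, and a new row created by a letter $v$ is placed above every first-column entry smaller than $v$. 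Hence inserting a letter exceeding $k^{*}$ never disturbs the positions of the entries at most $k^{*}$, and inserting a letter at most $k^{*}$ sends it to the same cell it would occupy if the larger entries were deleted (a larger entry it might bump simply plays the role of the $\infty$ that would terminate the standalone insertion). Establishing this restriction principle cleanly, including the bookkeeping of new-row placements so that the small entries stay at the bottom, is the main obstacle.

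Granting the restriction principle, the subdiagram of $P$ on entries at most $k^{*}$ equals the insertion tableau of $\rw(T')$, so it remains to compute the latter. Since $\rw(T')=k^{*}\cdot\rw(T'')$ with $k^{*}$ the largest entry, inserting $k^{*}$ first places it as a single box, and as the remaining (smaller) letters are inserted it is never bumped --- it lies in the first column --- and no smaller letter can ever come to rest immediately to its right. Thus the insertion tableau of $\rw(T')$ is the insertion tableau of $\rw(T'')$ with $k^{*}$ adjoined as a singleton top row. By Proposition~\ref{prop:TisSPYCT} the insertion tableau of $\rw(T'')$ is an $\SPYCT$, and by Lemma~\ref{col12same} its first two columns agree with those of $T''$; since the top row of $T''$ is a single box, so is the top row of its insertion tableau. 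Adjoining the singleton row for $k^{*}$ therefore produces a shape whose part in row $r^{*}$ equals $1$ while being an interior part, i.e.\ a non-peak composition.

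Consequently the subdiagram of $P$ on the entries at most $k^{*}$ is not of peak shape, so $P$ violates $\SPYCT3$ and is not an $\SPYCT$, as required. Combined with Proposition~\ref{prop:TisSPYCT}, this yields the equivalence $T\in\SPCT(\alpha)\Leftrightarrow P\in\SPYCT$ under reading insertion that Theorem~\ref{bijection} requires.
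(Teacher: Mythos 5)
Your structural analysis of the minimal violation is correct: if $k^{*}$ is the smallest value for which $\SPCT3$ fails, then $k^{*}$ sits alone in the first column of a new top row of the truncation, the truncation $T''$ to entries $\le k^{*}-1$ is an $\SPCT$ with singleton top row, and $\rw(T')=k^{*}\cdot\rw(T'')$. The computation of the insertion tableau of $\rw(T')$ from that of $\rw(T'')$ is also sound. But the argument hinges entirely on the ``restriction principle'' --- that the cells of $P$ holding entries $\le k^{*}$ form exactly the insertion tableau of $\rw(T')$ --- and you state this by analogy with Schensted insertion and then explicitly defer its proof (``the main obstacle''). That is a genuine gap, and it is not a routine one for Procedure~\ref{proc:insert}: the scan there runs column-by-column from the right through the \emph{augmented} diagram, so inserting the large letters changes which boxes are scanned, where the $\infty$ boxes sit, and where new rows are created. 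One must check that the set of valid landing sites for a small letter, and their order under the column scan, are unaffected by the presence of larger entries; that a large entry occupying the box just right of a small row plays the role of the $\infty$ that would terminate the standalone insertion; and that new rows created by large letters are always inserted above every row containing a small entry, so that the small rows keep their absolute indices. None of this is automatic from the statement of the procedure, and as written your proof does not establish the one claim everything else rests on.

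For comparison, the paper avoids any such commutation-with-truncation lemma. It takes $m$ to be the smallest entry violating $\SPCT3$ and $k$ the first-column entry directly below $m$, and then tracks only a single box: the one immediately to the right of $k$ in $P$. Using that all entries read before and within row $r$ (other than $k$) exceed $m$, and that for rows below $r$ the first two letters necessarily land in columns $1$ and $2$ of a fresh bottom row while all later letters of that row land in column $\ge 3$ (by \cite[Lemma 3.5]{AHM}), it concludes that the box right of $k$ is either absent or holds an entry greater than $m$, so $P$ fails $\SPYCT3$ on $\{1,\dots,m\}$. Your route is more conceptual and would give a stronger statement (the entire truncated shape of $P$), but to be a proof it needs the restriction principle proved in full for this specific insertion; alternatively, you could replace it by the paper's local tracking of the single box right of $T(1,r^{*})$, which requires far less.
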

\begin{proof}
Note that the first column of $T$ is preserved under reading insertion, i.e., the first column of $P$ is identical to the first column of $T$. The same argument in the proof of Lemma~\ref{col12same} applies; for the first column this only depends on $T$ being an $\SIT$.

Since $T\notin \SPCT(\alpha)$, $T$ fails $\SPCT3$. 
Let $m$ be the smallest entry of $T$ such that $\SPCT3$ is violated on $\{1, \ldots , m\}$. Since entries of $T$ increase along rows, $m$ is in the first column. 
Let $k$ be the entry immediately below $m$ in the first column of $T$. If $k=m-1$, then since the first column is preserved on insertion, we have $m-1$ immediately below $m$ in the first column of $P$. Then $P$ fails $\SPYCT3$ on $\{1, \ldots , m\}$, since the entry immediately right of $m-1$ must be greater than $m$.

Therefore, suppose $k<m-1$. Let $r$ denote the row containing $k$. Since entries increase along rows of $T$, and since $T$ fails $\SPCT3$ on the entries $\{1, \ldots , m\}$, every entry in row $r$ of $T$ other than $k$ is larger than $m$. It suffices to show that in $P$, either there is no entry immediately right of $k$ or the entry immediately right of $k$ is larger than $m$, as then $P$ fails $\SPYCT3$, also on the entries $\{1, \ldots , m\}$.
Let $s$ be an entry in the second column of $T$, strictly below row $r$. We claim that when $s$ is inserted, it is the smallest entry of $T$ inserted thus far, except for the entry immediately left of $s$ in $T$. To see this, note that if there was any first column entry $j$ in $T$ strictly below $m$ and strictly above $s$ such that $j<s$, then $T$ would fail $\SPCT3$ on $\{1,\ldots, j\}$, and by $\SPCT2$, $j<m$, a contradiction. Since $s$ is smaller than every first column entry of $T$ strictly above it in $T$, by $\SPCT1$, $s$ is smaller than every entry of $T$ strictly above it. 

Insert entries from all rows of $T$ down to (and including) row $r$. Since all these entries except $k$ are greater than $m$, at this point in the process either there is no entry immediately right of $k$ in $P$, or the entry immediately right of $k$ in $P$ is greater than $m$. Now consider inserting the entries from a row below row $r$. The first entry inserts in the first column, and then the second entry must insert in the second column immediately right of the first entry since it is the smallest entry of $T$ (other than the first entry of that row) inserted thus far. The boxes of $P$ created by insertion of all later entries in this row are in the third column or later, by \cite[Lemma 3.5]{AHM}. In particular, the insertion of any entry of $T$ that is strictly below row $r$ cannot create or change the entry immediately right of $k$ in $P$. Therefore in $P$, either there is no entry immediately right of $k$, or the entry immediately right of $k$ is greater than $m$. Either way, $P$ fails $\SPYCT3$ on $\{1, \ldots , m\}$.
\end{proof}

\begin{proof}[Proof of Theorem \ref{bijection}]
By Theorem~\ref{thm:insertionofT}, reading insertion gives a bijection between $\SIT(\alpha)$ and pairs $(P,Q)$ where $P$ is an $\SYCT$ and $Q$ is a $\DIRT$ of the same shape as $P$ with row strip shape $\alpha^{\rev}$. Moreover, if reading insertion sends $T\in \SIT(\alpha)$ to $(P,Q)$, then $\Des_{\uparrow}(T) = \Des_{\leftarrow}(P)$. 
Combining this with Propositions~\ref{prop:TisSPYCT} and \ref{prop:notspctnotspyct}, we have a bijection between $\SPCT(\alpha)$ and pairs $(P,Q)$ where $P$ is an $\SPYCT$ and $Q$ is a $\DIRT$ of the same shape as $P$ with row strip shape $\alpha^{\rev}$. Since $\Des_{\uparrow}(T) = \Des_{\leftarrow}(P)$ when reading insertion sends $T\in \SIT(\alpha)$ to $(P,Q)$, we have $\Peak_{\uparrow}(T) = \Peak_{\leftarrow}(P)$.
\end{proof}

\begin{theorem}\label{thm:expansion}
Let $\alpha$ be a peak composition of $n$. The expansion of the quasisymmetric Schur $Q$-function $\tilde{Q}_\alpha$ in the peak Young quasisymmetric Schur functions is given by 
\[\tilde{Q}_\alpha = \sum c_{\alpha,\beta}\tilde{S}_\beta\] 
where the sum is over peak compositions $\beta$ of $n$, and $c_{\alpha,\beta}$ is the number of $\DIRT$s of shape $\beta$ with row strip shape $\alpha^{\rev}$. 
\end{theorem}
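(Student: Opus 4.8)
The plan is to start from the peak-function expansion of $\tilde{Q}_\alpha$ furnished by Theorem~\ref{qsqfpeaks}, re-index that sum through the reading insertion bijection of Theorem~\ref{bijection}, and then regroup by shape so that the inner summation reassembles the definition of $\tilde{S}_\beta$. The substantive work has already been carried out in establishing Theorem~\ref{bijection}, so the proof here is essentially a reorganisation of a known sum.

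First I would write, by Theorem~\ref{qsqfpeaks},
\[\tilde{Q}_\alpha = \sum_{T\in \SPCT(\alpha)}K_{\comp_n(\Peak_{\uparrow}(T))}.\]
By Theorem~\ref{bijection}, reading insertion is a bijection from $\SPCT(\alpha)$ onto the set of pairs $(P,Q)$ in which $P$ is an $\SPYCT$, $Q$ is a $\DIRT$ with row strip shape $\alpha^{\rev}$, and $P$ and $Q$ have the same shape; moreover it satisfies $\Peak_{\uparrow}(T)=\Peak_{\leftarrow}(P)$. Substituting, each summand $K_{\comp_n(\Peak_{\uparrow}(T))}$ becomes $K_{\comp_n(\Peak_{\leftarrow}(P))}$, giving
\[\tilde{Q}_\alpha = \sum_{(P,Q)}K_{\comp_n(\Peak_{\leftarrow}(P))},\]
with the sum over all such pairs.

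Next I would partition this sum according to the common shape $\beta$ of $P$ and $Q$; since $P$ is an $\SPYCT$, each such $\beta$ is automatically a peak composition. The crucial observation is that, once $\beta$ is fixed, $P$ and $Q$ range independently: $P$ over all of $\SPYCT(\beta)$ and $Q$ over all $\DIRT$s of shape $\beta$ with row strip shape $\alpha^{\rev}$, because the only coupling between $P$ and $Q$ in the image of the bijection is the requirement that their shapes agree. Hence the number of admissible $Q$ for a given $\beta$ is exactly $c_{\alpha,\beta}$, independent of the chosen $P$, and I may factor the summation as
\[\tilde{Q}_\alpha = \sum_{\beta}c_{\alpha,\beta}\sum_{P\in \SPYCT(\beta)}K_{\comp_n(\Peak_{\leftarrow}(P))}.\]
Recognising the inner sum as $\tilde{S}_\beta$ via Definition~\ref{def:PYQS} yields $\tilde{Q}_\alpha = \sum_{\beta}c_{\alpha,\beta}\tilde{S}_\beta$, as claimed.

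The only point needing care is the decoupling in the penultimate step: I must verify that fixing the common shape $\beta$ genuinely permits $P$ and $Q$ to be chosen independently, i.e. that every pairing of a $P\in\SPYCT(\beta)$ with an admissible $Q$ of shape $\beta$ actually arises in the image of the bijection. This is immediate from the explicit description of that image in Theorem~\ref{bijection}, but since the appearance of the multiplicity $c_{\alpha,\beta}$ hinges entirely on it, I would state the independence explicitly rather than leave it implicit.
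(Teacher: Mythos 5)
Your proposal is correct and follows essentially the same route as the paper: both invoke Theorem~\ref{qsqfpeaks}, transport the sum through the peak-set-preserving bijection of Theorem~\ref{bijection}, and regroup the resulting pairs $(P,Q)$ by common shape to recover Definition~\ref{def:PYQS}. Your explicit remark that $P$ and $Q$ decouple once the shape is fixed is a valid (and slightly more careful) articulation of a step the paper leaves implicit.
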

\begin{proof}
This is similar to the proof of \cite[Theorem 1.1]{AHM}. Let $Y(\alpha) = \{(P,Q)\}$, where $P$ is an $\SPYCT$ and $Q$ is a $\DIRT$ with the same shape as $P$ and row strip shape $\alpha^{\rev}$.  Theorem \ref{bijection} gives a bijection from $\SPCT(\alpha)$ to $Y(\alpha)$, which preserves peak sets. Therefore 
\[\sum_{T\in \SPCT(\alpha)}K_{\comp_n(\Peak_{\uparrow}(T))} = \sum_{(P,Q)\in Y(\alpha)} K_{\comp_n(\Peak_{\leftarrow}(P))}.\] 

By Theorem \ref{qsqfpeaks}, the left hand side of this equation is $\tilde{Q}_\alpha$, and by Definition~\ref{def:PYQS} the right hand side is $\sum c_{\alpha,\beta}\tilde{S}_\beta$, where $c_{\alpha,\beta}$ counts the $\DIRT$s of peak shape $\beta$ and row strip shape $\alpha^{\rev}$. 
\end{proof}

\begin{example}
Continuing with Example \ref{expansion}, since inserting the reading words generates a single $\DIRT$ of shape $(3,2)$ and a single $\DIRT$ of shape $(2,3)$, we have \[\tilde{Q}_{(3,2)} = \tilde{S}_{(3,2)}+\tilde{S}_{(2,3)}.\]
\end{example}

We have shown that the coefficients in the expansion of $\tilde{Q}_\alpha$ into peak Young quasisymmetric Schur $Q$-functions $\tilde{S}_\beta$ are determined by the $\DIRT$s of peak shape with row strip shape $\alpha^{\rev}$. Hence, if we can find all such $\DIRT$s, we obtain the expansion without needing to perform insertion. To this end we present the following procedure, which is a modified version of an analogous procedure from \cite{AHM}.

\begin{procedure}\label{proc:generateDIRTs}
Let $\alpha = (\alpha_1,\ldots ,\alpha_k)$ be a peak composition of $n$.  
Given $1\le i< k$ and a $\DIRT$ $Q$ of peak shape with row strip shape $(\alpha_{k-i+1},\ldots ,\alpha_k)^{\rev}$, we form $\DIRT$s of peak shape with row strip shape $(\alpha_{k-i},\ldots ,\alpha_k)^{\rev}$ as follows. Let $m$ be the largest element of $Q$. We assign entries from $[m+1, m+\alpha_{k-i}]$ sequentially to $Q$ in each possible way according to the rules 
\begin{enumerate}
\item The entries $m+1$ and $m+2$ become the first two entries of a new bottom row of $Q$. (Note $m+2$ exists, since $\alpha_{k-i}\ge 2$.) 
\item Each subsequent entry is placed at the end of a row strictly to the right of the last entry placed.
\item No entry can be placed at the end of a row of length $j$ if there exists a row of length $j+1$ below this row. 
\end{enumerate}
Now, starting with the single $\DIRT$ of peak shape with row strip shape $(\alpha_k)^{\rev}$, namely,
\[\begin{array}{c@{\hskip \cellsize}c@{\hskip \cellsize}c@{\hskip \cellsize}c@{\hskip \cellsize}c@{\hskip \cellsize}c}\tableau{1&2&...&\alpha_k}\end{array},\]
iterate this process on all $\DIRT$s created at each step, until the entry $n$ has been assigned to each.
\end{procedure}

\begin{proposition}
Let $\alpha$ be a peak composition of $n$.  Procedure~\ref{proc:generateDIRTs} generates all $\DIRT$s of peak shape with row strip shape $\alpha^{\rev}$.
\end{proposition}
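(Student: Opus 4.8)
The plan is to prove both directions: \emph{soundness} (every filling produced by Procedure~\ref{proc:generateDIRTs} is a $\DIRT$ of peak shape with row strip shape $\alpha^{\rev}$) and \emph{completeness} (every such $\DIRT$ arises). Throughout I would exploit two structural facts about the row strip added last, namely the one whose entries are the largest: its entries are larger than all others in the tableau, and since it is a row strip its boxes occupy strictly increasing columns, so each column contains at most one of its entries.

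For soundness I would induct on the number of row strips added. The base filling $1\,2\cdots\alpha_k$ is a single-row $\DIRT$ of peak shape with row strip shape $(\alpha_k)$. For the inductive step, suppose a row strip of length $\alpha_{k-i}\ge 2$ is added to a valid $\DIRT$ $Q'$ via rules (1)--(3). Conditions $\DIRT1$--$\DIRT3$ are immediate: entries are placed in increasing order at the ends of rows (so rows increase), rule~(1) places each row strip's first entry in column~$1$ (so row strips start in the first column), and that entry is the new maximum placed at the bottom of column~$1$ (so the first column increases downward). Peak shape is maintained because rule~(1) gives the new bottom row at least two entries, every previously created row only grows, and the top row remains the top since new rows are appended below. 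The substantive point is $\DIRT4$: placing the new (hence largest) entry $e$ at the end of a row $\rho$, in column $c$, can create a violation only of the form $Q(c,\rho)>Q(c,\rho'')$ with $\rho''<\rho$ and box $(c+1,\rho'')$ absent, which happens exactly when some row below $\rho$ has length precisely $c$; this is precisely what rule~(3) forbids. The complementary configurations are ruled out automatically, since $e$ exceeds every other entry and $Q'$ already satisfies $\DIRT4$.

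For completeness I would induct on $k$. The case $k=1$ reduces to showing the single-row filling is the only $\DIRT$ of peak shape with row strip shape $(\alpha_1)$, which holds because the left-justified shape forces one box per row in column~$1$ while $\DIRT2$ permits only one column-$1$ entry. For $k\ge 2$, given a target $\DIRT$ $Q$ of peak shape with row strip shape $\alpha^{\rev}$, let $R$ be the row strip of largest entries $[n-\alpha_1+1,n]$, of length $\alpha_1\ge 2$. The crux is a removal lemma asserting that $Q\setminus R$ is a $\DIRT$ of peak shape with row strip shape $(\alpha_k,\ldots,\alpha_2)$. Here I would use that $Q$ has exactly $k$ rows (one column-$1$ entry per row strip, by $\DIRT2$), that the bottom row consists entirely of $R$ (its entries increase from $n-\alpha_1+1$), and that the only entry of $R$ in column~$2$ lies in that bottom row; hence removing $R$ deletes only the bottom row together with boxes in columns $\ge 3$ of higher rows, so no non-top row drops below length~$2$ and peak shape survives. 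The remaining $\DIRT$ axioms for $Q\setminus R$ are inherited from $Q$, using that deleting the largest entries cannot destroy a required box $(c+1,r')$ without $Q$ itself having violated $\DIRT4$.

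Finally I would apply the inductive hypothesis to $Q\setminus R$ and verify that reinserting the entries of $R$ in increasing order is a single step of the procedure: rule~(1) holds because $n-\alpha_1+1$ and $n-\alpha_1+2$ occupy the bottom-row cells $(1,1)$ and $(2,1)$; rule~(2) holds because the boxes of the row strip occur in strictly increasing columns and form a rightmost block in each row; and rule~(3) holds at every placement because a row below of the forbidden length would, exactly as in the soundness computation, force a $\DIRT4$ violation in the valid tableau $Q$. I expect the main obstacle to be the removal lemma, where the ``at most one entry per column'' property of the row strip and the ``$R$ is largest'' property must be combined carefully to control both the peak-shape condition and $\DIRT4$ simultaneously.
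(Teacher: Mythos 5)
Your proposal is correct and follows essentially the same strategy as the paper: check that each step of Procedure~\ref{proc:generateDIRTs} preserves $\DIRT1$--$\DIRT4$, peak shape and the row strip shape (with rule (3) doing the work for $\DIRT4$ and rule (1) for peak shape), and then establish completeness by induction on $\ell(\alpha)$. The only difference is one of detail rather than of route: the paper delegates the completeness induction to the analogous argument in \cite{AHM}, supplying only the peak-specific observation that rule (1) is forced, whereas you make it self-contained via the removal lemma for the largest row strip; in your verification of rule (3) there you should also handle the case where the box $(c_j+1,r'')$ does exist in $Q$ but contains a larger, not-yet-placed entry of $R$, which $\DIRT4$ rules out just as readily since that entry would have to be smaller than $a_j$.
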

\begin{proof}
Given a $\DIRT$ $Q$ of peak shape with row strip shape $(\alpha_{k-i+1},\ldots ,\alpha_k)^{\rev}$, any filling $Q'$ obtained from $Q$ by applying one step of Procedure~\ref{proc:generateDIRTs} will, by construction, satisfy $\DIRT1$, $\DIRT2$ and $\DIRT3$. Moreover, (3) ensures $\DIRT4$ holds, (1) ensures $Q'$ has peak shape, and $Q'$ has row-strip shape $(\alpha_{k-i},\ldots ,\alpha_k)^{\rev}$ by construction. Hence this procedure generates $\DIRT$s of peak shape with row strip shape $\alpha^{\rev}$. 

Note that (1) is necessary: since all row strips must start in the first column, if the second entry of some row strip is not placed immediately right of the first entry, then the second column of the resulting tableau will have fewer boxes strictly below row $k$ than the first column does, and thus the resulting tableau cannot have peak shape. As in \cite{AHM}, one can then use induction on $\ell(\alpha)$ to show the Procedure~\ref{proc:generateDIRTs} generates all $\DIRT$s of peak shape with row strip shape $\alpha^{\rev}$. 
\end{proof}

Note also that, by construction, the $\DIRT$s produced at each step in Procedure~\ref{proc:generateDIRTs} are distinct. 

\begin{example}
We apply Procedure~\ref{proc:generateDIRTs} to $\alpha = (3,2,3)$. Since $\alpha_3 = 3$, we start with the $\DIRT$ \[\begin{array}{c@{\hskip \cellsize}c@{\hskip \cellsize}c@{\hskip \cellsize}c@{\hskip \cellsize}c@{\hskip \cellsize}c}\tableau{1&2&3}.\end{array}\] 
At the next step, we assign entries $\{4,5\}$. By (1), there is only one way to do this, giving 
\[\begin{array}{c@{\hskip \cellsize}c@{\hskip \cellsize}c@{\hskip \cellsize}c@{\hskip \cellsize}c@{\hskip \cellsize}c}\tableau{1&2&3\\4&5&}\end{array}\]
At the third and final step, we assign entries $\{6,7,8\}$ in all possible ways. By (1), the $6$ and $7$ start a new third row, and then there are three choices for placing the $8$, giving \[\begin{array}{c@{\hskip \cellsize}c@{\hskip \cellsize}c@{\hskip \cellsize}c@{\hskip \cellsize}c@{\hskip \cellsize}c}\tableau{1&2&3\\4&5\\6&7&8}   ~~~~,~~~~ ~~~~\tableau{1&2&3\\4&5&8\\6&7}~~~~~~,~~~~~\tableau{1&2&3&8\\4&5\\6&7}\end{array}\] 
Therefore, by Theorem~\ref{thm:expansion}, we have 
\[\tilde{Q}_{(3,2,3)} = \tilde{S}_{(3,2,3)}+\tilde{S}_{(2,3,3)}+\tilde{S}_{(2,2,4)}.\]
\end{example}

As a consequence of Procedure~\ref{proc:generateDIRTs}, we obtain the following characterisation for when a quasisymmetric Schur $Q$-function is equal to a peak Young quasisymmetric Schur function.

\begin{proposition}
The quasisymmetric Schur $Q$-function indexed by a peak composition $\alpha = (\alpha_1,\ldots ,\alpha_k)$ is precisely the peak Young quasisymmetric Schur function indexed by $\alpha$ if and only if either $\alpha = (2,\ldots ,2, \alpha_k)$ or if $\alpha = (2,\ldots,2,\alpha_{k-1},1)$. 
\end{proposition}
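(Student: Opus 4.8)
The plan is to combine Theorem~\ref{thm:expansion} with a careful reading of Procedure~\ref{proc:generateDIRTs}. By Theorem~\ref{thm:expansion}, $\tilde{Q}_\alpha = \sum_\beta c_{\alpha,\beta}\tilde{S}_\beta$, where $c_{\alpha,\beta}$ counts the $\DIRT$s of peak shape $\beta$ with row strip shape $\alpha^{\rev}$, and these $\DIRT$s are exactly the fillings generated by Procedure~\ref{proc:generateDIRTs}. Since the $\tilde{S}_\beta$ form a basis and each $c_{\alpha,\beta}\ge 0$, we have $\tilde{Q}_\alpha = \tilde{S}_\alpha$ if and only if the total number of such $\DIRT$s equals $1$ and this single $\DIRT$ has shape $\alpha$. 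First I would observe that greedily placing every incoming row strip entirely as a new bottom row is always a legal run of the procedure and produces a $\DIRT$ of shape exactly $\alpha$; hence $c_{\alpha,\alpha}\ge 1$ always, so the total count is $1$ precisely when the procedure produces a \emph{unique} $\DIRT$, and in that case this unique $\DIRT$ must be the greedy one and so has shape $\alpha$. Moreover the procedure produces a unique $\DIRT$ exactly when no step offers a genuine choice of placement, since two distinct placements of an entry yield distinct fillings that each complete to a $\DIRT$.

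The core of the argument is to pin down exactly when a choice arises as row strips of lengths $\alpha_k,\alpha_{k-1},\ldots,\alpha_1$ are added in this order (the first being the starting single row). Adding a strip of length $2$ is forced by rule (1), so a choice can only occur for an incoming strip of length $\ge 3$, and then only at its third entry. The key claim is: if the current shape has bottom row of length $\ge 2$, then after the forced placement of the first two entries as a new bottom row of length $2$, the third entry may be placed either at the end of this new bottom row or at the end of the row immediately above it. I would verify that both positions lie strictly right of the second entry and are permitted by rule (3) --- the row immediately above has length $\ge 2$, while the only row below it has length $2$, so no strictly longer row lies below it --- and that each placement completes to a valid $\DIRT$ of peak shape. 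Thus a choice occurs. Conversely, when the incoming strip has length $\ge 3$ but the current shape is the single row $(1)$, the row above offers no box in a column strictly right of the second entry, so every remaining entry is forced into the new bottom row.

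It then remains to determine when the current bottom row can have length $1$. I would show the current shape equals $(1)$ at exactly one moment, namely immediately after the first row strip and only when $\alpha_k = 1$, since every later strip has length $\ge 2$ (as $\alpha_j\ge 2$ for $j<k$) and so creates a new bottom row of length $\ge 2$. Combining this with the previous paragraph: if $\alpha_k\ge 2$, a choice occurs iff some $\alpha_j\ge 3$ with $j\le k-1$, so no choice occurs iff $\alpha_1=\cdots=\alpha_{k-1}=2$, i.e.\ $\alpha=(2,\ldots,2,\alpha_k)$; if $\alpha_k=1$, the step adding $\alpha_{k-1}$ to $(1)$ is forced, and a choice occurs at a later step iff some $\alpha_j\ge 3$ with $j\le k-2$, so no choice occurs iff $\alpha_1=\cdots=\alpha_{k-2}=2$, i.e.\ $\alpha=(2,\ldots,2,\alpha_{k-1},1)$. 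These are precisely the two stated families.

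I expect the main obstacle to be the core claim of the second paragraph: rigorously showing that a strip of length $\ge 3$ inserted into a shape whose bottom row has length $\ge 2$ genuinely admits two placements of its third entry. This requires careful bookkeeping of the ``strictly to the right'' condition together with rule (3), and confirming that the alternative placement (at the end of the row above) does not get stuck but completes to a valid $\DIRT$ of peak shape, so that the two runs really do yield two distinct $\DIRT$s.
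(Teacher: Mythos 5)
Your proposal is correct and follows essentially the same route as the paper: both reduce the question, via Theorem~\ref{thm:expansion} and Procedure~\ref{proc:generateDIRTs}, to deciding when the procedure admits any genuine choice, and both locate that choice at the third entry of an incoming strip of length $\ge 3$ whose new bottom row sits under a row of length $\ge 2$ (equivalently, the paper's condition $\alpha_i\ge 3$, $\alpha_{i+1}\ge 2$). Your treatment is if anything slightly more explicit about why the $\alpha_k=1$ case forces the entire strip $\alpha_{k-1}$ and why both placements of the third entry complete to valid $\DIRT$s.
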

\begin{proof} 
If $\alpha$ has one of these forms, it is immediate from (1) and (2) that the only $\DIRT$ of row strip shape $\alpha^{\rev}$ that can be obtained via Procedure~\ref{proc:generateDIRTs} is the $\DIRT$ whose row strips are rows. Since this $\DIRT$ has shape $\alpha$, we have $\tilde{Q}_\alpha = \tilde{S}_\alpha$.

If $\alpha$ does not have one of the forms described, then for some $1\le i <k$, we have $\alpha_{i+1} \ge 2$ and $\alpha_i \ge 3$. For any $\DIRT$ of row strip shape $(\alpha_{i+2}, \ldots , \alpha_k)^{\rev}$ generated by Procedure~\ref{proc:generateDIRTs}, in the next step of the procedure place all entries corresponding to $\alpha_{i+1}$ as a new single row at the bottom, then in the following step, start by placing the first two entries corresponding to $\alpha_i$ as the first two entries in a new row at the bottom. Now, to complete this step we may place remaining entries corresponding to $\alpha_i$ in either this new row or the row above, hence generating more than one $\DIRT$. So $\tilde{Q}_\alpha \neq \tilde{S}_\alpha$.
\end{proof}

\section*{Acknowledgements}
The authors acknowledge the support of the Marsden Fund, administered by the Royal Society of New Zealand Te Ap{\=a}rangi.

\bibliographystyle{abbrv} 
\bibliography{ArxivVersion}

\end{document}